\documentclass[reqno]{amsart}
\pdfoutput=1 

\usepackage[all,hyperref]{bi-discrete} 
\usepackage[backend=biber,maxbibnames=5,maxalphanames=5,style=alphabetic,bibencoding=utf8,giveninits,url=false,isbn=false,eprint = true]{biblatex}
\renewbibmacro{in:}{}
\AtBeginBibliography{\small}

\usepackage{tikz}
\usetikzlibrary{decorations.pathreplacing,angles,quotes}
\usetikzlibrary{calc}

\usepackage[thinlines]{easytable}

\usepackage{booktabs}

\usepackage{pgfplots}
\pgfplotsset{compat=1.18} 

\usepackage{subcaption}
\usepackage{float}
\usetikzlibrary{patterns}

\newcommand{\ml}{\mathcal{L}}

\newcommand{\LK}{\mathcal{L}^1_K}

\newcommand{\metric}{d}

\newcommand{\dx}{\, \mathrm{d}x}

\begin{document}

\author[L.~Diening]{Lars Diening}
\author[V.~Lingert]{Viktoria Lingert}
\author[T.~Tscherpel]{Tabea Tscherpel}

\begin{abstract}
  We establish $L^p$- and $W^{1,p}$-stability of the $L^2$-projection onto mapped Lagrange finite elements on hybrid meshes consisting of triangles and convex quadrilaterals arising from adaptive mesh refinement.  
  If $K$ is the (tensor product) degree of polynomials of the discretisation, then we show, in particular, $W^{1,2}$-stability for all $K\geq 2$ for the Q-RG and Q-RB refinements. 
  This extends results by Ali, Funken, and Schmidt~\cite{AliFunkenSchmidt2022} which hold for the range $2 \leq K \leq 9$ for initial meshes consisting of parallelograms.  
  Our proof relies on an extension of the technique by Diening, Storn and Tscherpel in~\cite{DieningStornTscherpel2021} to general convex quadrilaterals.
\end{abstract}

\subjclass[2010]{
	65N30, 		
	65N50,	 	
	65N12, 		
	65M60}		

\keywords{$L^2$-projection, $L^{p}$-stability, Sobolev stability, quadrilateral meshes, adaptive mesh refinement, Lagrange elements}

\title[Sobolev stability of the $L^2$-projection on hybrid meshes]{Sobolev stability of the $L^2$-projection\\ on hybrid meshes}

\maketitle


\section{Introduction}
\label{sec:introduction}

Stability of the $L^2$-projection in norms other than $\norm{\cdot}_{L^2}$ is a key element in the numerical analysis of finite element methods.  For example, for the heat equation $W^{1,2}$-stability is equivalent to the inf-sup stability and quasi-optimality of Galerkin methods (see~\cite{TantardiniVeeser2016}).  For nonlinear evolution problems stability properties provide an important ingredient in the proof of error estimates that do not rely on a coupling of time and space discretisation parameters~\cite{BreitDieningStornEtAl2021}.  Furthermore, decay properties of the $L^2$-projection are instrumental for results on $hp$-preconditioning~\cite{GrahamSpence2025}.

Let $\Pi$ denote the $L^2$-projection onto the Lagrange finite element space $V_{h}$ of a certain polynomial degree $K \geq 1$. 
We aim for stability in $L^p$ and $W^{1,p}$ for $p \in [1,\infty]$, i.e., we want to show that 
\begin{align*}
	  \norm{\Pi u}_{p} \lesssim \norm{u}_{p} \qquad \text{ and } \qquad 
  \norm{\nabla \Pi u}_{p} \lesssim \norm{\nabla u}_{p},
\end{align*}
holds for any $u \in L^p(\Omega)$ or $u \in W^{1,p}(\Omega)$ uniformly in the mesh size, respectively. 

As proved in~\cite{BrambleXu1991} for quasi-uniform meshes stability in $W^{1,2}$ follows from inverse estimates and by using a Sobolev stable operator like the Scott--Zhang operator~\cite{ScottZhang1990} or the Clément operator~\cite{Clement1975}.  
However, both for non-quasiuniform meshes and for $p \neq 2$ establishing stability results is a highly challenging task. 
For the analysis of adaptive finite element methods it is important that Sobolev stability holds for graded meshes arising from adaptive mesh refinement, uniformly in the refinement level. 
Furthermore, stability estimates for $p \neq 2$ are crucial in the error analysis of nonlinear evolution equations~\cite{BreitDieningStornEtAl2021}.

In dimension $d = 1$ it is known that Sobolev stability cannot hold without conditions on the mesh grading~\cite{CrouzeixThomee1987,BankYserentant2014}. 
In fact,  in~\cite{BankYserentant2014} a counterexample to $W^{1,2}$-stability is presented for the lowest order case when neighbouring intervals differ sufficiently in size.  
In higher dimensions analogous counterexamples are unknown, but all available stability results are based on certain grading assumptions. 
Specifically, this means that the local mesh sizes of neighbouring elements are not allowed to differ too strongly.

For \emph{simplicial meshes} in dimension $d \geq 2$ numerous stability results of the $L^2$-projection are available.  
Beginning with the work on mildly graded triangulations in~\cite{CrouzeixThomee1987,Boman2006,ErikssonJohnson1995}, stability results were subsequently established for low-order spaces on highly graded meshes, such as those generated by adaptive mesh refinement~\cite{Steinbach2001, Steinbach2002,BramblePasciakSteinbach2002,Carstensen2002,Carstensen2004}. 
More recently, stability was established for a bounded range of polynomial degrees and on meshes exhibiting realistic grading~\cite{BankYserentant2014,GaspozHeineSiebert2016,GaspozHeineSiebert2019} for dimension $d = 2$ and in~\cite{BankYserentant2014} for dimension $d = 3$. 
The most recent results in~\cite{DieningStornTscherpel2021,DieningStornTscherpel2026} build on the techniques developed in~\cite{BankYserentant2014} and establish mesh grading results for BisecMT~\cite{Maubach1995,Traxler1997}, the generalisation of the newest vertex bisection, in any space dimension $d \in \mathbb{N}$. 
Taken together, these works establish $W^{1,2}$-stability for triangulations generated by BisecMT in any dimension $d \leq 6$ for any polynomial degree $K \geq 1$.  
More generally, $W^{1,p}$- and $L^p$-stability are established in~\cite{DieningStornTscherpel2021} under certain grading assumptions by means of an argument utilising maximal operators.

In addition to simplicial meshes, quadrilateral meshes are frequently employed for finite element discretisation~\cite{ArnoldBoffiFalkEtAl2001,ArnoldBoffiFalk2002}, see also~\cite[Ch.~4, Sec.~4.3]{Ciarlet2002}.  
In contrast to simplicial meshes, structured quadrilateral meshes admit a tensor product structure enabling highly efficient implementations. 
However, adaptive mesh refinement on purely quadrilateral meshes typically introduces hanging vertices. 
To address this, \emph{hybrid meshes} consisting of simplices and quadrilaterals offer a flexible framework that accommodates adaptive refinement while retaining mesh conformity. 
Such refinement schemes have been presented in~\cite{BankShermanWeiser1983,Kobbelt1996,ZhaoMaoShi2010}. 
In particular, quadrilaterals are well-suited for anisotropic meshes~\cite{Apel1999} which are often employed in the presence of edge singularities, see, e.g.,~\cite{FaustmannMarcatiMelenkEtAl2023}. 
This work, however, focuses on isotropic shape-regular hybrid meshes. 

To date, the only stability results for the $L^2$-projection onto Lagrange elements on hybrid meshes are due to Ali, Funken, and Schmidt~\cite{AliFunkenSchmidt2022}. 
In their work, the authors consider two-dimensional hybrid meshes generated via adaptive Q-RG~\cite{BankShermanWeiser1983} and Q-RB~\cite{Kobbelt1996} refinement starting from initial meshes composed exclusively of parallelograms. 
The resulting hybrid meshes consist of triangles and trapezoids.  
For these meshes, they obtain mesh grading results by applying the approach from~\cite{Carstensen2004, GaspozHeineSiebert2016}. 
Furthermore, by employing the technique from~\cite{BramblePasciakSteinbach2002,Carstensen2002} and numerically solving certain local generalised eigenvalue problems, they show $W^{1,2}$-stability for polynomial degree $K \in \{2, \ldots, 9\}$.   
\medskip 

This work extends previous stability results for the $L^2$-projection onto Lagrange finite element spaces on two-dimensional meshes containing simplices and quadrilaterals in the following directions: 
\noindent%
\begin{itemize}[leftmargin=5mm]
\item 
Our results apply to general hybrid meshes consisting of triangles and general convex quadrilaterals that satisfy certain grading and shape-regularity properties. 
This includes, in particular, meshes generated by the Q-RG and Q-RB refinement scheme starting from an initial mesh of general quadrilaterals; see Remark~\ref{rem:general-grading} below. 
This generalises the results in~\cite{AliFunkenSchmidt2022} for initial meshes consisting exclusively of parallelograms to those consisting of general quadrilaterals. 
\item 
 	Our stability results hold under general mesh grading assumptions and can be readily applied to other refinement schemes that yield hybrid meshes satisfying grading and shape-regularity properties.

\item 
 We extend the valid range of polynomial degrees from $K \in \{2, \ldots, 9\}$ to all $K \geq 2$. 
 	Since the decay properties of~$\Pi$ improve for higher polynomial degrees, this extension provides a foundation for future investigation of $hp$-methods. 		

\item  
We establish $W^{1,p}$- and $L^p$-stability by following the framework in~\cite[Sec.~4]{DieningStornTscherpel2021}, which relies solely on weighted $L^2$-estimates and the stability of a quasi-interpolation operator, such as the Scott--Zhang operator.  
Consequently, any future improvements in the weighted $L^2$-estimates will immediately transfer into enhanced $W^{1,p}$ and $L^p$-stability results. 
\end{itemize}

\emph{Strategy and outline.}  
In Section~\ref{sec:mesh-funct-spac}, we introduce finite element spaces on general hybrid meshes and review some adaptive mesh refinement schemes. 
Furthermore, we collect available grading and shape-regularity results.  
To establish the stability results in Section~\ref{sec:stab}, we follow the general strategy in~\cite{DieningStornTscherpel2021}, which improves upon the approach in~\cite{BankYserentant2014}.  
The core idea is to use a local approximation operator, see Section~\ref{sec:appr-oper}, to establish decay estimates.  
A (local) decomposition operator, see Sections~\ref{sec:decomp-oper} and~\ref{sec:global-decomposition}, is key to verifying estimates for the approximation operator.  
To extend the results from simplicial meshes to general quadrilaterals, we utilise the tensor-product structure of the local spaces, but the fact that the transformations are in general non-affine poses extra challenges.
 While some of these difficulties can be avoided by restricting the initial meshes to parallelograms as in~\cite{AliFunkenSchmidt2022}, 
we address the more general case of arbitrary convex quadrilaterals. 
The remaining steps -- deducing a decay estimate in Section~\ref{sec:decay-estimates}, weighted $L^2$-stability and finally $L^p$- and $W^{1,p}$-stability in Section~\ref{sec:stab-res} -- proceed analogously to the simplicial case.  
They rely on results analogous to those in~\cite[Sec.~4]{DieningStornTscherpel2021}. 

\emph{Extensions and limitations}. 
In general dimension $d \geq 3$, the only available grading results  are those in~\cite{DieningStornTscherpel2026} for simplicial meshes and BisecMT refinement (the generalised newest vertex bisection). 
 The proof of this result is considerably more challenging than its two-dimensional counterparts, because a direct enumeration and investigation of worst-case scenarios is not possible.  
 Furthermore, refinement schemes and corresponding grading results for hybrid meshes are currently unavailable in higher dimensions. 
 Simply relying on the tensor-product structure in combination with the techniques from~\cite{DieningStornTscherpel2021} fails to yield suitable estimates for dimensions $d\geq 3$, see Remark~\ref{rmk:general-dim} below.  For this reason, establishing stability results on hybrid meshes in higher dimensions necessitates alternative strategies.   


\section{Meshes and function spaces}
\label{sec:mesh-funct-spac}

In this work we consider two-dimensional hybrid meshes consisting of triangles and (strictly) convex quadrilaterals. 
In this section we introduce the precise setup for the meshes and the finite dimensional function spaces that we use. 

\subsection{Hybrid meshes}\label{sec:mesh}

Let $\Omega \subset \mathbb{R}^2$ be a bounded, polyhedral domain. 
We write $a \lesssim b$, if there is a constant~$c>0$ only depending on specified parameters such that~$a \leq c b$. 
We write $a \eqsim b$, if $a \lesssim b$ and $b \lesssim a$.

We decompose~$\Omega$ into a mesh~$\mathcal{T}$ consisting of (closed) triangles and (strictly) convex quadrilaterals, where \emph{element} refers to both triangles and quadrilaterals. 
The latter are called quadrilaterals of type (1) in~\cite[Ch.~4, Sec.~4.3]{Ciarlet2002}.

 We suppose that the mesh~$\mathcal{T}$ is regular, in the following sense: 
\begin{enumerate}
\item The mesh covers~$\overline{\Omega}$, i.e., $\overline{\Omega}=\bigcup_{T\in\mathcal{T}} T$. 
\item If $T_1,T_2 \in \mathcal{T}$ overlap, then $T_1 \cap T_2$  is a joint edge or vertex in~$\mathcal{T}$. 
In particular, there are no hanging vertices.
\end{enumerate}
Let $\mathcal{V} = \mathcal{V}(\mathcal{T})$ denote the set of vertices of $\mathcal{T}$ and let $\omega_i \coloneqq \{T \in \mathcal{T} \colon i \in T\}$ denote the vertex patch of a vertex $i \in \mathcal{V}$. 
	By $\Omega_i \coloneqq (\cup_{T \in \omega_i} T)^\circ $ we denote the vertex patch domain.  

In the following we refer to a hybrid mesh $\mathcal{T}$ as 
\begin{itemize}
\item \emph{simplicial}, if $\mathcal{T}$ contains only triangles, 
\item \emph{quadrilateral}, if $\mathcal{T}$ contains only (strictly) convex quadrilaterals,
\item \emph{P-quadrilateral}, if $\mathcal{T}$ contains only parallelograms, 
\item \emph{SP-hybrid}, if $\mathcal{T}$ contains only triangles and parallelograms, and 
\item \emph{general hybrid}, if $\mathcal{T}$ contains triangles and general convex quadrilaterals.
\end{itemize}

We also use the concept of shape-regularity which measures the extent of deformation of elements. 
We employ the standard approach which applies both for triangles and quadrilaterals, see~\cite[Ch.~4, \S~4.3]{Ciarlet2002}.  

If $T \in \mathcal{T}$ is a triangle, then we define $\widehat{T}$ as the equilateral triangle with vertices $(0,0)^\top, (1,0)^\top, (\frac 12,\frac{\sqrt{3}}{2})^\top$ and diameter $\diameter(\widehat{T}) = 1$. 
If $T$ is a quadrilateral, then~$\widehat{T}$ 
denotes the unit square~$[0,1]^2$. 
For triangles there exists an affine linear isomorphism $B_T\colon \widehat{T} \to T$ and for quadrilaterals there exists an affine bilinear isomorphism~$B_T\colon \widehat{T} \to T$, see Figure~\ref{fig:transformation}. 
By $J_T$ we denote the Jacobian determinant of~$B_T$. 
Since $T$ is strictly convex, one can choose~$B_T$ such that~$J_T$ is positive on~$\widehat{T}$. 

\begin{figure}[H]
  \begin{tikzpicture}
    \begin{scope}[shift={(0,0)}]
      \draw (0,0) -- (1,0) -- (1/2,0.866) -- cycle;
      \node at (0.5,0.25) {\scriptsize $\widehat{T}$};
    \end{scope}
    \begin{scope}[shift={(1,0)}]
      \draw[->] (0.2,0.5) -- (0.7,0.5);
      \node at (0.45,0.65) {\scriptsize $B_T$};
    \end{scope}
    \begin{scope}[shift={(2,0)}]
      \draw (0,0) -- (1.2,0.5) -- (0.2,1) -- cycle;
      \node at (0.4,0.45) {\scriptsize $T$};
    \end{scope}
    \begin{scope}[shift={(5,0)}]
      \draw (0,0) rectangle (1,1);
      \node at (0.5,0.5) {\scriptsize $\widehat{T}$};
    \end{scope}
    \begin{scope}[shift={(6.2,0)}]
      \draw[->] (0.2,0.5) -- (0.7,0.5);
      \node at (0.45,0.65) {\scriptsize $B_T$};
    \end{scope}
    \begin{scope}[shift={(7,0)}]
      \draw (0,0) -- (1.2,-0.2) -- (1.8,1.1) -- (0.6,0.8) -- cycle;
      \node at (0.8,0.40) {\scriptsize $T$};
    \end{scope}
  \end{tikzpicture}
  \caption{Transformation of a triangle (left) and a convex quadrilateral (right).}
  \label{fig:transformation}
\end{figure}

The \emph{shape-regularity constant} of an element $T \in \mathcal{T}$ is defined as
\begin{align}\label{def:chi-T}
  \chi(T)  \coloneqq  \max_{\widehat{x}\in \widehat{T}} \norm{\nabla B_T(\widehat{x})} \max_{x \in T} \norm{\nabla B_T^{-1}(x)},
\end{align}
where $\norm{\cdot}$ is the spectral norm which is induced by the Euclidean vector norm.  Note that $\chi(T) \geq 1$, and that~$\chi(T)$ increases with the deformation of~$T$.

The mapping $B_T$ is not unique, since the vertices can be rotated. 
However, due to the symmetry of~$\widehat{T}$, the definition of~$\chi(T)$ in~\eqref{def:chi-T} is independent of the specific choice of~$B_T$.
This independence was also observed by Knupp~\cite{Knupp2001}, where he discusses several mesh-quality metrics. 
Moreover, by using the equilateral triangle and the unit square as reference elements, we obtain that~$\chi(\widehat{T})=1$.

By standard arguments, one can show that for $T\in \mathcal{T}$ one has
\begin{alignat}{2}
  \label{eq:quotient-diameter}
  \frac{1}{\max_T \norm{\nabla B_T^{-1}}} 
  &\leq \frac{\diameter(T)}{\diameter(\widehat{T})} 
  &&\leq \max_T \norm{\nabla B_T},
  \\
  \label{eq:quotient-volume}
  \frac{1}{\max_T \abs{J_T^{-1}}} &\leq \quad\;\frac{\abs{T}}{\abs{\widehat{T}}}
   &&
     \leq
   \max_T \abs{J_T}
\end{alignat}
Thus, the estimates~$\abs{J_T}^{\frac 12} \leq \norm{\nabla B_T}$ and~$\abs{J_T^{-1}}^{\frac 12} \leq \norm{\nabla B_T^{-1}}$ imply that
\begin{align}
  \label{eq:diam-vs-volume}
  \frac{1}{\chi(T)} \frac{\diameter(\widehat{T})}{\abs{\widehat{T}}^{\frac 12}} \leq \frac{\diameter(T)}{\abs{T}^{\frac 12}} \leq \chi(T) \frac{\diameter(\widehat{T})}{\abs{\widehat{T}}^{\frac 12}} .
\end{align}
We define $\chi(\mathcal{T}) \coloneqq \max_{T \in \mathcal{T}} \chi(T)$  as the shape-regularity constant of a hybrid mesh~$\mathcal{T}$.
If $\chi(\mathcal{T}_j)$ is bounded for a family of meshes $(\mathcal{T}_j)_{j \in \mathbb{N}}$, then, in particular, we have  
\begin{align}
  \label{eq:shape-regular}
  \diameter(T) \eqsim \abs{T}^{\frac 12} \qquad \text{for any $T \in \mathcal{T}_j$,} 
  \;\;
  \text{for any $j \in \mathbb{N}$,}
\end{align}
i.e., uniformly for the family of meshes. 

\begin{remark}
  \label{rem:shape-regularity-triangles}
  For triangles, one additionally can show that 
  \begin{gather}    
    \label{eq:quotient-diameter2}
    \frac{1}{\norm{\nabla B_T^{-1}}} 
    \leq \frac{\rho(T)}{\rho(\widehat{T})} 
    \leq \norm{\nabla B_T},
    \\
    \label{eq:quotient-triangle}
    \norm{\nabla B_T} \leq \frac{\diameter(T)}{\rho(\widehat{T})}
    \qquad \text{and} \qquad
    \norm{\nabla B_T^{-1}} \leq \frac{\diameter(\widehat{T})}{\rho(T)},
\end{gather} 
 cf.~\cite[Ch.~11.1]{ErnGuermond2021}. 
As a consequence, in combination with~\eqref{eq:quotient-diameter} one has 
\begin{gather}
    \frac{\rho(\widehat{T})}{\diameter(\widehat{T})}\, \frac{\diameter(T)}{\rho(T)} \leq \chi(T) \leq \frac{\diameter(\widehat{T})}{\rho(\widehat{T})}\, \frac{\diameter(T)}{\rho(T)}. 
  \end{gather}
  This shows that, for triangles, $\chi(T)$ is equivalent to $\diameter(T)/\rho(T)$, which is usually introduced as the shape-regularity constant.
\end{remark} 

\subsection{Grading}
\label{sec:grading}

Let us collect the notations for distance, graded weight functions and mesh functions as  in~\cite{DieningStornTscherpel2021} adapted to the setting of hybrid meshes.

We start by defining a metric~$\metric$ on~$\mathcal{T}$.  For $T \in \mathcal{T}$ we define $\metric(T,T) \coloneqq 0$.  Moreover, if $T,T' \in \mathcal{T}$ with $T \neq T'$ are neighbours, i.e., $T \cap T' \not= \emptyset$, then we set~$\metric(T,T') \coloneqq1$.  For all other $T,T' \in \mathcal{T}$ we define $\metric(T,T')$ as the length of the shortest chain $T_0,\dots, T_m \in \mathcal{T}$ with $T_0 =T$ and $T_m=T'$ and $\metric(T_j,T_{j+1})=1$.  For subsets $L, L' \subset \mathcal{T}$ we define $\metric(L,L') \coloneqq \min \set{\metric(T,T')\colon T \in L, \, T' \in L'}$.

By $\mathcal{L}^0_0(\mathcal{T})$ we denote the set of $L^1(\Omega)$ functions that are piecewise constant on~$\mathcal{T}$. A \emph{weight} $\rho$ with \emph{grading} $\gamma_\rho\geq 1$ is a positive function~$\rho \in \mathcal{L}^0_0(\mathcal{T})$ with
\begin{align}
  \label{eq:grading}
  \rho|_T \leq \gamma_\rho \rho|_{T'} \qquad \text{for all $T,T'\in\mathcal{T}$ with $\metric(T,T')=1$}.
\end{align}
Note that this is equivalent to
\begin{align}
  \label{eq:grading2}
  \rho|_T \leq \gamma_\rho^{\metric(T,T')} \rho|_{T'} \qquad \text{for all $T,T' \in \mathcal{T}$}.
\end{align}

We call~$h\in \mathcal{L}^0_0(\mathcal{T})$ a (regularised) \emph{mesh size function} of~$\mathcal{T}$, if
\begin{align}
  \label{eq:hT}
  h|_T &\eqsim \diameter(T) \qquad \text{ for all } T \in \mathcal{T},
\end{align}
where the implicit constant does not depend on~$T \in \mathcal{T}$.

We say that the mesh~$\mathcal{T}$ has \emph{grading}~$\gamma_{\mathcal{T}} \geq 1$, if there exists a mesh size function~$h \in \mathcal{L}^0_0(\mathcal{T})$ with grading~$\gamma_{\mathcal{T}}$, i.e.,
\begin{align}\label{def:grading}
  \frac{h|_{T}}{h|_{T'}} \leq \gamma_{\mathcal{T}}  \quad \text{ for all } T,T' \in \mathcal{T} \text{ with } T \cap T' \neq \emptyset.
\end{align}
For a family of adaptively refined meshes, later a uniform maximal grading constant $\gamma_{\mathcal{T}} \leq \gamma$ will be important.

\subsection{Function spaces}\label{sec:fct-space}

By $L^p(\Omega)$ and $W^{1,p}(\Omega)$ we denote the usual Lebesgue and Sobolev spaces with norms $\norm{\cdot}_p$ and $\norm{\cdot}_{1,p}$, respectively, for $p \in [1,\infty]$. 
For~$K \in \setN$ by $\mathcal{P}_K(U)$ we denote the set of polynomials on a set~$U$ of total degree~$K$ (or less). 
Moreover, we denote the polynomials of maximal degree $K$ on~$[0,1]^{2}$ by
\begin{equation}\label{def:QK}
\begin{aligned}
  \mathcal{Q}_K([0,1]^2) 
  & \coloneqq \mathcal{P}_K([0,1]) \otimes \mathcal{P}_K([0,1]) \\
  & \coloneqq \linearspan \set{ (x_1,x_2)^\top \mapsto p_1(x_1)p_2(x_2)\colon p_1,p_2 \in \mathcal{P}_K([0,1])}.
\end{aligned}
\end{equation}

Let us define the local spaces on triangles and on (strictly) convex quadrilaterals. 
As in Section~\ref{sec:mesh} let $B_T \colon \widehat{T} \to T$ denote the isomorphism from the reference element~$\widehat{T}$ to the element~$T$. 
For a triangle $T \in \mathcal{T}$ we define
\begin{align}\label{def:LK-tria}
  \mathcal{L}_K(T) &\coloneqq \set{ p \circ B_T^{-1}\colon p \in \mathcal{P}_K(\widehat{T})} = \mathcal{P}_K(T), 
\end{align}
and for a quadrilateral $T \in \mathcal{T}$ we define
\begin{align}\label{def:LK-quad}
  \mathcal{L}_K(T) &\coloneqq \set{ p \circ B_T^{-1}\colon p \in \mathcal{Q}_K([0,1]^2)},
\end{align}
which is in general not a space of polynomials. 
Note that the definition of~$\mathcal{L}_K(T)$ is independent of the choice of~$B_T$.

We define the global space of discrete functions over~$\Omega$ as
\begin{align}\label{def:L1K}
  \mathcal{L}^1_K(\mathcal{T}) &\coloneqq \set{ v \in W^{1,\infty}(\Omega) \colon v|_T \in \mathcal{L}_K(T) \text{ for all } T \in \mathcal{T}}.
\end{align}
This is the special case of isoparametric elements for bilinear transformations. 
We refer to $\mathcal{L}^1_K(\mathcal{T})$ as (bilinearly) mapped Lagrange elements. 
These spaces have approximation order~$K+1$ in $L^2$ and order $K$ in $W^{1,2}$ in terms of~$h$, see~\cite{ArnoldBoffiFalk2002}. 

\subsubsection*{Function space decomposition}
\label{sec:space-decomposition}

Let us denote by~$(\phi_i)_{i\in \mathcal{V}}$ the nodal basis of~$\mathcal{L}^1_1(\mathcal{T})$, which forms a partition of unity. 
Note that on triangles the $\phi_i$ are the barycentric coordinates and thus affine linear. 
On quadrilaterals the $\phi_i$ are the pullback of the tensor products of the one-dimensional barycentric coordinates of the standard square. 
If~$T$ is a parallelogram, then $B_T$ is affine and $\phi_i$ is affine bilinear.  
In the other cases $\phi_i$ is a non-singular rational function.

Using the partition of unity $(\phi_i)_{i\in \mathcal{V}}$ we have the following decomposition
\begin{align}
  \label{eq:space-decomposition}
  \mathcal{L}^1_K(\mathcal{T})=\sum_{i\in\mathcal{V}}\phi_{i}\mathcal{L}_{K-1}^{1}(\omega_{i}).
\end{align}

\subsection{Adaptive mesh refinement}
\label{sec:adapt-mesh-refin}

$W^{1,2}$-stability of the $L^2$-projection mapping to Lagrange finite element spaces requires mesh conditions, specifically conditions on the mesh grading as in~\eqref{def:grading}.  
In dimension $d = 1$ , there are counterexamples~\cite[Sec.~7]{BankYserentant2014} showing that for a mesh with sufficiently strong grading, this stability cannot hold. 
In particular, the authors construct a sequence of one-dimensional meshes with large grading such that~$W^{1,2}$-stability fails for~$\mathcal{L}^1_1(\mathcal{T})$.   
A finer analysis allows to quantify the grading threshold of this example as
\begin{align}\label{est:gamma-threshold}
\gamma \geq 1/(1+\sqrt{3}-\sqrt{3+2\sqrt{3}}) \approx 5.274510564406.
\end{align}

In~\cite{AliFunkenSchmidt2022} adaptive quadrilateral meshes refined by local red, green and blue refinements are analysed. 
The local refinements of $[0,1]^2$ are depicted in the first row of Figure~\ref{fig:QRG}, and for general convex quadrilaterals in the second row. 
In the latter case the bilinear mapping~$B_T \colon [0,1]^2 \to T$ is used to determine the  new vertices in~$T$. 
Then, the corresponding vertices are connected by straight lines in~$T$, see Figure~\ref{fig:QRG} second line. 
Note that straight lines in $[0,1]^2$ are mapped to straight lines in $T$ only if they are axis-parallel. 
Hence, mapping the new edges in $[0,1]^2$ does not lead to quadrilaterals with straight edges after mapping them.  

The global adaptive refinement routine uses red refinements, and temporary green refinements (Q-RG) or temporary blue refinements (Q-RB) to remove hanging nodes. 
The Q-RG and Q-RB refinement routines were originally introduced in~\cite{BankShermanWeiser1983} and in~\cite{Kobbelt1996}, respectively. 
Grading properties are established in \cite{AliFunkenSchmidt2022} and the proof uses a regularised mesh size function introduced in~\cite{Carstensen2002}.

\begin{figure}[t!]
	\begin{tikzpicture}[scale=1.1]
		\begin{scope}[scale=2,shift={(1.2,0)}]
			\draw[fill=red!30] (0,0) rectangle (1,1);
			\fill (0,0) circle (1pt);
			\fill (1,0) circle (1pt);
			\fill (0,1) circle (1pt);
			\fill (1,1) circle (1pt);
			\draw (1/2,0) -- (1/2,1);
			\draw (0,1/2) -- (1,1/2);
			\fill (1/2,0) circle (1pt);
			\fill (1/2,1) circle (1pt);
			\fill (0,1/2) circle (1pt);
			\fill (1,1/2) circle (1pt);
			\fill (1/2,1/2) circle (1pt);
		\end{scope}
		\begin{scope}[scale=2,shift={(2.4,0)}]
			\draw[fill=rltgreen!30] (0,0) rectangle (1,1);
			\fill (0,0) circle (1pt);
			\fill (1,0) circle (1pt);
			\fill (0,1) circle (1pt);
			\fill (1,1) circle (1pt);
			\draw (0,1) -- (1/2,0) -- (1,1);
			\fill (1/2,0) circle (1pt);
		\end{scope}
		\begin{scope}[scale=2,shift={(3.6,0)}]
			\draw[fill=rltgreen!30] (0,0) rectangle (1,1);
			\fill (0,0) circle (1pt);
			\fill (1,0) circle (1pt);
			\fill (0,1) circle (1pt);
			\fill (1,1) circle (1pt);
			\draw (0,1) -- (1/2,0) -- (1,1/2) -- cycle;
			\fill (1/2,0) circle (1pt);
			\fill (1,1/2) circle (1pt);
		\end{scope}
		\begin{scope}[scale=2,shift={(4.8,0)}]
			\draw[fill=rltgreen!30] (0,0) rectangle (1,1);
			\fill (0,0) circle (1pt);
			\fill (1,0) circle (1pt);
			\fill (0,1) circle (1pt);
			\fill (1,1) circle (1pt);
			\draw (1/2,0) -- (1/2,1);
			\fill (1/2,0) circle (1pt);
			\fill (1/2,1) circle (1pt);
		\end{scope}
		\begin{scope}[scale=2,shift={(6.0,0)}]
			\draw[fill=cyan!30] (0,0) rectangle (1,1);
			\fill (0,0) circle (1pt);
			\fill (1,0) circle (1pt);
			\fill (0,1) circle (1pt);
			\fill (1,1) circle (1pt);
			\fill (1/2,0) circle (1pt);
			\fill (1,1/2) circle (1pt);
			\fill (1/2,1/2) circle (1pt);
			\draw (0,1) -- (1/2,1/2);
			\draw (1,1/2) -- (1/2,1/2);
			\draw (1/2,0) -- (1/2,1/2);
		\end{scope}
	\end{tikzpicture}

  \bigskip%
	\begin{tikzpicture}[scale=1.1]
		
		\def\setcoords{
			\coordinate (A) at (0.1,0);
			\coordinate (B) at (0.9,0.1);
			\coordinate (C) at (1,1.2);
			\coordinate (D) at (0,1);
			
			\coordinate (AB) at ($(A)!0.5!(B)$);
			\coordinate (BC) at ($(B)!0.5!(C)$);
			\coordinate (CD) at ($(C)!0.5!(D)$);
			\coordinate (DA) at ($(D)!0.5!(A)$);
			
			\coordinate (M)  at ($(AB)!0.5!(CD)$);
		}

		\begin{scope}[scale=2,shift={(1.2,0)}]
			\setcoords
			\draw[fill=red!30] (A) -- (B) -- (C) -- (D) -- cycle;
			\draw (AB) -- (CD);
			\draw (DA) -- (BC);
			\foreach \p in {A, B, C, D, AB, BC, CD, DA, M} \fill (\p) circle (1pt);
		\end{scope}
		
		\begin{scope}[scale=2,shift={(2.4,0)}]
			\setcoords
			\draw[fill=rltgreen!30] (A) -- (B) -- (C) -- (D) -- cycle;
			\draw (D) -- (AB) -- (C);
			\foreach \p in {A, B, C, D, AB} \fill (\p) circle (1pt);
		\end{scope}
		
		\begin{scope}[scale=2,shift={(3.6,0)}]
			\setcoords
			\draw[fill=rltgreen!30] (A) -- (B) -- (C) -- (D) -- cycle;
			\draw (D) -- (AB) -- (BC) -- cycle;
			\foreach \p in {A, B, C, D, AB, BC} \fill (\p) circle (1pt);
		\end{scope}
		
		\begin{scope}[scale=2,shift={(4.8,0)}]
			\setcoords
			\draw[fill=rltgreen!30] (A) -- (B) -- (C) -- (D) -- cycle;
			\draw (AB) -- (CD);
			\foreach \p in {A, B, C, D, AB, CD} \fill (\p) circle (1pt);
		\end{scope}
		
		\begin{scope}[scale=2,shift={(6.0,0)}]
			\setcoords
			\draw[fill=cyan!30] (A) -- (B) -- (C) -- (D) -- cycle;
			\draw (D) -- (M);
			\draw (BC) -- (M);
			\draw (AB) -- (M);
			\foreach \p in {A, B, C, D, AB, BC, M} \fill (\p) circle (1pt);
		\end{scope}
		
	\end{tikzpicture}
	\caption{Local refinement of $[0,1]^2$ (first line) and of a general convex quadrilateral (second line) as used in Q-RG and Q-RB refinement, see~\cite{AliFunkenSchmidt2022} and~\cite{BankShermanWeiser1983,Kobbelt1996}}
	\label{fig:QRG}
\end{figure}

\begin{lemma} \label{lem:refinement-properties}
  The local element shapes generated by red, green, and blue refinement, as well as the global mesh types generated by the Q-RG and Q-RB strategies, strictly satisfy the closure properties summarised in Table~\ref{tab:refinement-rules}.
\end{lemma}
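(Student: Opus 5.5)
The table is not shown, but presumably it lists: red refinement maps parallelograms to parallelograms, trapezoids to trapezoids, and general quadrilaterals to general quadrilaterals. Green and blue refinements produce triangles plus quadrilaterals of the corresponding type. The global claim would be that Q-RG and Q-RB started from P-quadrilateral meshes produce SP-hybrid meshes (or triangle--trapezoid meshes), while started from general quadrilateral meshes they produce general hybrid meshes. My plan is to prove the local statements by direct geometric computation with the bilinear map $B_T$, and then obtain the global statements by induction over the refinement steps.

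\emph{Local step.} Write $B_T(\widehat x)=a+b\widehat x_1+c\widehat x_2+d\widehat x_1\widehat x_2$ with $d=0$ exactly when $T$ is a parallelogram. New vertices are $B_T$ of edge midpoints and of the centre $(\tfrac12,\tfrac12)$. Edge midpoints of $[0,1]^2$ map to edge midpoints of $T$, since $B_T$ is affine along each edge.

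\begin{itemize}
\item \emph{Red refinement.} The four children are the images of the axis-parallel subsquares, and $B_T$ maps axis-parallel segments to straight segments. Each child $T'$ is therefore a quadrilateral with straight edges, and $B_T\circ(\text{affine scaling})$ is a bilinear map onto it. If $d=0$, this composite is affine, so parallelograms go to parallelograms. If $T$ is a trapezoid, say $b\parallel$ the direction of its parallel sides, then the images of the lines $\widehat x_2=\mathrm{const}$ are parallel to one another, so each child is again a trapezoid.
\item \emph{Strict convexity.} For the children this follows because $J_{T'}$ is a positive multiple of $J_T$ restricted to a subsquare. A bilinear map with positive Jacobian on the square has a strictly convex image.
\item \emph{Green and blue refinements.} Here the new edges are straight segments joining already-mapped vertices. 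The pieces are triangles, which are automatically admissible, and, for green with two opposite midpoints or for blue, quadrilaterals. Green with opposite midpoints is the image of a half-square, so the same argument as for red applies. For blue, the quadrilateral pieces such as $A,AB,M,D$-type regions are bounded by the straight segments $AB$--$M$ and $M$--$D$. Their convexity has to be checked directly, using that $M$ lies in the interior of $T$ and that $T$ is convex.
\end{itemize}

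\emph{Global step.} Each refinement step of Q-RG or Q-RB applies red refinement to marked elements and temporary green or blue closure to neighbours. Before a temporary element is refined further, it is removed and replaced by red refinement of its parent. Induction on the number of steps then shows that every element is either a triangle arising from closure or a descendant-by-red of an initial quadrilateral. By the local step, that descendant inherits the type of its ancestor. This also yields conformity, since midpoints of shared edges coincide from both sides.

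\emph{Main obstacle.} I expect the hardest part to be the blue refinement of a \emph{general} convex quadrilateral, with the new edges drawn as straight lines to $M=B_T(\tfrac12,\tfrac12)$ as in the figure. Here the pieces are not images of subsquares, so convexity, and the possible requirement that they be parallelograms or trapezoids, must be verified by hand. My first attempt would be to show that $M$ lies in the open triangles cut off by the relevant diagonals, so that each interior angle at $M$, $AB$ and $BC$ is less than $\pi$. I would do this using that $M$ is the average of the two midpoint-connectors, which holds because $B_T$ is affine along the lines $\widehat x_1=\tfrac12$ and $\widehat x_2=\tfrac12$.
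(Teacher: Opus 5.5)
Your proposal is correct and follows the paper's route: red and green quadrilaterals are images of axis-parallel subrectangles under $B_T$ (the remaining green pieces are triangles), and for the blue quadrilateral the paper uses exactly your key fact that $B_T((\tfrac12,\tfrac12)^\top)$ is the midpoint of the straight segment $B_T(L)$, $L=\{\tfrac12\}\times[0,1]$, so the blue piece arises from the convex green half by sliding its vertex $B_T((\tfrac12,1)^\top)$ along an edge to that midpoint, which preserves convexity. Two small corrections: blue refinement of a parallelogram yields two trapezoids that are not parallelograms (which is why the table lists Q-RB on P-quadrilateral meshes as producing general hybrid meshes), and your global induction must also list the quadrilateral closure pieces (green halves, blue pieces) alongside the triangles and red descendants.
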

\begin{proof}
  We only verify in detail that the blue refinement (see Figure~\ref{fig:QRG}) of a convex quadrilateral generates quadrilaterals that are convex, since all other statements are simple to check. 
  It suffices to prove convexity of the blue quadrilateral in the bottom left corner in Figure~\ref{fig:QRG}. 
  For this we observe that the new (axis-parallel) line~$L$ in the middle of the third green refinement in Figure~\ref{fig:QRG} is mapped by the bilinear map $B_T\colon  [0,1]^2\to T$ to a straight line $B_T(L)$. 
  Then, the blue quadrilateral can be generated from the green quadrilateral by moving the point $B_T((1/2,1)^\top)$ to the centre of the line~$B_T(L)$. 
  The convexity is preserved by this operation.
\end{proof}

\begin{table}[ht!]
  \centering
  \renewcommand{\arraystretch}{1.2}
  \begin{tabular}{lll}
    \toprule
    \textbf{operation} & \textbf{initial element} & \textbf{resulting elements} \\
    \midrule
    \multicolumn{3}{c}{\textit{local element refinement}} \\
    \midrule
    red & parallelogram & parallelograms \\
    red & convex quadrilateral & convex quadrilaterals \\
    green & parallelogram & triangles and parallelograms \\
    green & convex quadrilateral & triangles and convex quadrilaterals \\
    blue & convex quadrilateral & convex quadrilaterals \\
    \midrule
    \multicolumn{3}{c}{\textit{global mesh refinement}} \\
    \midrule
    Q-RG & P-quadrilateral $\mathcal{T}_0$ & SP-hybrid meshes \\
    Q-RB & P-quadrilateral $\mathcal{T}_0$ & general hybrid meshes \\
    Q-RG / Q-RB & quadrilateral $\mathcal{T}_0$ & general hybrid meshes \\
    \bottomrule
  \end{tabular}
  \caption{Closure properties of individual elements and full meshes under different refinement strategies.}
  \label{tab:refinement-rules}
\end{table}

  \begin{lemma}
    \label{lem:shape-regular-Q-RG-Q-RB}
  Let $\mathcal{T}_0$ be a quadrilateral mesh. Then, the families of meshes generated by the adaptive refinement routines Q-RG or Q-RB are uniformly shape-regular. More precisely, for any such mesh~$\mathcal{T}$ we have
  \begin{align*}
    \frac{\chi(\mathcal{T})}{\chi(\mathcal{T}_0)}
    &\leq
      \begin{cases}
        \frac{5+\sqrt{13}}{2\sqrt{3}} \,\,\approx 2.4842  &\qquad \text{for Q-RG},
        \\
        \frac{3+\sqrt{5} }{\sqrt{2}} \,\,\,\,\approx 3.7024 &\qquad \text{for Q-RB}.
      \end{cases}
  \end{align*}
\end{lemma}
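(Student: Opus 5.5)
The plan is to exploit the fact that every element produced by Q-RG or Q-RB is the image, under the bilinear map $B_{T_0}$ of some initial element $T_0 \in \mathcal{T}_0$, of an element of a \emph{reference refinement} of $[0,1]^2$. Indeed, red refinement of a convex quadrilateral $T$ is defined through $B_T$, and the child quadrilaterals are the images under $B_T$ of the axis-parallel subsquares $[a,a+2^{-k}]\times[b,b+2^{-k}]$. Since the restriction of a bilinear map to an axis-parallel subsquare, precomposed with the affine scaling from $[0,1]^2$, is again bilinear, we get by induction that every quadrilateral of red-refinement level $k$ is $B_{T_0}(\widehat Q)$ with $\widehat Q$ a dyadic subsquare. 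Moreover, $B_T = B_{T_0}\circ A$ with $A$ affine, a scaled translation. Green and blue elements are formed at most once from such a red descendant, because they are temporary and removed before further refinement. Their vertices are images of dyadic points, and their edges are straight segments in $T$, not images of straight segments in $[0,1]^2$.

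For red descendants the bound is immediate. With $B_Q = B_{T_0}\circ A$ and $A=2^{-k}\,\mathrm{Id}+c$, we have $\nabla B_Q = 2^{-k}\nabla B_{T_0}\circ A$ and $\nabla B_Q^{-1} = 2^{k}(\nabla B_{T_0})^{-1}$. Taking maxima over a subset gives $\chi(Q)\le\chi(T_0)$. It remains to treat the green triangles, the green subquadrilaterals and the blue quadrilaterals. For each of these I will compare with the reference configuration. Let $G$ be such an element inside a red descendant $Q=B_Q([0,1]^2)$. I write $B_G = B_Q\circ\Phi\circ C$, where $\widehat G := B_Q^{-1}(G)$ is a curvilinear region in $[0,1]^2$. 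The trick is to avoid $\widehat G$ and work directly in $T$. Since $G$ has straight edges and its vertices are the $B_Q$-images of fixed reference points (corners, edge midpoints, centre), $B_G$ is the bilinear or affine interpolant of those images.

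The key estimate will be this. For a bilinear map $F$, the bilinear interpolant of $F(p_1),\dots,F(p_4)$ for reference points $p_j$ satisfies
\begin{align*}
\nabla B_G = \sum_j F(p_j)\otimes\nabla\widehat\phi_j .
\end{align*}
Since $F$ is bilinear, the differences $F(p_j)-F(p_l)$ equal $\int\nabla F\,(p_j-p_l)$ along the segment between them, which is an average of $\nabla F$ on the convex hull. Hence $\nabla B_G(\widehat x) = \overline{M}(\widehat x)\,\widehat D(\widehat x)$, where $\overline M$ is a convex combination of values $\nabla B_Q$ on $[0,1]^2$ and $\widehat D$ is the gradient of the corresponding reference map, i.e.\ the reference green/blue element from the first row of Figure~\ref{fig:QRG}. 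This gives $\max\norm{\nabla B_G}\le \max\norm{\nabla B_Q}\max\norm{\widehat D}$.

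The hard part is the inverse bound. A convex combination of matrices $\nabla B_Q(y)$ need not have a small inverse in general. Here, however, the relevant matrices are of the form $\nabla B_Q$ evaluated at points with columns that are fixed convex combinations. For a bilinear map, $\nabla B_Q(y) = [\partial_1 B_Q(y_2),\partial_2 B_Q(y_1)]$ depends on $y_2$ in its first column and on $y_1$ in its second column only. Averages therefore remain exactly of the form $\nabla B_Q(\bar y)$, and so $\norm{\overline M^{-1}}\le\max\norm{\nabla B_Q^{-1}}$. I expect this column-wise structure to make the argument work. Combining the two bounds gives $\chi(G)\le\chi(Q)\,\chi_{\mathrm{ref}}(\widehat G)\le\chi(T_0)\,\chi_{\mathrm{ref}}(\widehat G)$, where $\chi_{\mathrm{ref}}$ is computed for the reference green and blue pieces in $[0,1]^2$ with their affine or bilinear maps from $\widehat T$.

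It then remains to compute $\chi$ for the finitely many reference shapes: the triangles and rectangles of the three green patterns, and the three blue quadrilaterals, one of which is a square. For triangles I will use Remark~\ref{rem:shape-regularity-triangles} or the singular values of the affine map from the equilateral triangle directly. For example, the triangle $(0,1),(1/2,0),(1,1)$ yields the value $\frac{5+\sqrt{13}}{2\sqrt3}$ after computing the largest-to-smallest singular value ratio. For the blue kite-shaped quadrilateral $(0,0)$–$(1/2,0)$–$(1/2,1/2)$–$(0,1)$ I will maximise $\norm{\nabla B}$ and $\norm{\nabla B^{-1}}$ over $[0,1]^2$ explicitly, expecting $\frac{3+\sqrt5}{\sqrt2}$. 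Taking the maximum over the shapes arising in each strategy gives the stated constants. The main obstacle is verifying carefully that the averaged-Jacobian factorisation holds pointwise, so that both the maximum of $\nabla B_G$ and the maximum of its inverse split as products.
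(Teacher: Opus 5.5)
Your architecture matches the paper's. Red descendants are $B_{T_0}\circ A$ with $A$ a dyadic similarity, so $\chi(Q)\le\chi(T_0)$, and each green or blue piece costs at most the factor $\chi(\widehat G)$ of its reference piece in Figure~\ref{fig:shape-regularities}. You also rightly notice something the paper's proof passes over. Composing $B_Q$ with the affine map onto the undeformed reference piece parametrises the curvilinear set $B_Q(\widehat G)$, not the straight-edged element $G$, whose map $B_G$ is the interpolant of the vertex images.

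\textbf{The gap.} It lies exactly in the factorisation you flagged. From $F(p_j)-F(p_l)=\nabla F(m_{jl})(p_j-p_l)$, with $m_{jl}$ the midpoint, each edge carries its \emph{own} averaged Jacobian. For the green triangle $(0,1),(\tfrac12,0),(1,1)$ the two edges at $(\tfrac12,0)$ pick up $\nabla F(\tfrac34,\tfrac12)$ and $\nabla F(\tfrac14,\tfrac12)$. No common matrix factors out, so your ``Hence'' does not follow. The column-wise remark only shows that convex combinations of values of $\nabla F$ are again values of $\nabla F$, which is just affinity of $\nabla F$. It does not produce one matrix acting on all directions at once.

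In fact the general claim is false. Take the reference triangle $(0,0),(1,1),(\tfrac12+\varepsilon,\tfrac12-\varepsilon)$. The only $\bar y$ with $\nabla B_G=\nabla F(\bar y)\nabla\widehat D$ satisfies $\bar y_1\to+\infty$ and $\bar y_2\to-\infty$ as $\varepsilon\to 0$. For the convex quadrilateral given by $F(y)=(y_1+y_1y_2/|\bar y_2|,\,y_2)$ the image triangle is even degenerate. So no lemma of the generality you state can hold; the specific geometry of the Q-RG/Q-RB pieces must enter.

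\textbf{The repair.} Write $F(y)=a+by_1+cy_2+d\,y_1y_2$ and $w\coloneqq\sum_j p_{j,1}p_{j,2}\widehat\phi_j$. Then $B_G=a+b\widehat D_1+c\widehat D_2+d\,w$. Hence $\nabla B_G(\hat x)=\nabla F(\bar y)\nabla\widehat D(\hat x)$ holds as soon as $\nabla w(\hat x)=\nabla\widehat D(\hat x)^\top(\bar y_2,\bar y_1)^\top$, which determines $\bar y(\hat x)$ uniquely. What you must check, piece by piece, is that $\bar y(\hat x)\in[0,1]^2$. This does hold for every relevant piece:
\begin{itemize}
\item The green rectangle and the blue square have edges that are images of axis-parallel segments, so $B_G=B_Q\circ A$ exactly.
\item The corner triangles give $\bar y$ equal to the right-angle vertex.
\item The two middle green triangles give $\bar y=(\tfrac12,1)$ and $\bar y=(\tfrac13,\tfrac23)$.
\item For the blue piece $(0,0),(\tfrac12,0),(\tfrac12,\tfrac12),(0,1)$ one has $\widehat D(\hat x)=(\tfrac12\hat x_1,\ \hat x_2-\tfrac12\hat x_1\hat x_2)$ and $w=\tfrac14\hat x_1\hat x_2$. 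Solving gives $\bar y_1=\hat x_1/(4-2\hat x_1)\in[0,\tfrac12]$ and $\bar y_2=\hat x_2(\tfrac12+\bar y_1)\in[0,1]$.
\item The other blue piece follows by the symmetry $(y_1,y_2)\mapsto(1-y_2,1-y_1)$.
\end{itemize}
With this, $\chi(G)\le\chi(Q)\,\chi(\widehat G)$ follows as you intended, and your argument becomes a rigorous version of the paper's.

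A minor slip: the triangle $(0,1),(\tfrac12,0),(1,1)$ has $\chi=2/\sqrt3$. The value $\frac{5+\sqrt{13}}{2\sqrt3}$ belongs to the corner triangles with legs $\tfrac12$ and $1$, such as $(0,0),(\tfrac12,0),(0,1)$.
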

\begin{proof}
  We investigate how the shape-regularity constant in~\eqref{def:chi-T} changes if a quadrilateral~$T$ is refined by a red, green or blue refinement, see Figure~\ref{fig:QRG}. 

  We begin with the red refinement. 
  Suppose that~$T$ is split into four quadrilaterals~$T_1,\dots, T_4$. 
  Then we can define $B_{T_j}\colon [0,1]^2 \to T_j$ by composing $B_T$ with the affine maps $S_1, \ldots, S_4$ from $[0,1]^2$ to its four quadrants. Then, with $\norm{\nabla S_i} = \tfrac{1}{2}$ and $\norm{\nabla S_i^{-1}} = 2$   
  the chain rule implies that $\chi(T_j) \leq \chi(T)$.

  Let us continue with the green and blue refinements as in Figure~\ref{fig:QRG}. 
  Those types of refinement are temporary and are replaced by red refinements in further refinements. 
  To estimate the shape regularity of the newly created elements $T_1$, $T_2$ and $T_3$ (or $T_1$ and $T_2$), we compose $B_T$ with an affine linear map from their reference $\widehat{T}_j$ to the (undeformed) elements as in the first line of Figure~\ref{fig:QRG}. 
  As a consequence the shape-regularity constant can increase in the green and blue refinements by a factor given by the shape regularity constant of the elements in Figure~\ref{fig:shape-regularities}.  
  This proves the claim.
\end{proof}

\begin{figure}[t!]
	\begin{tikzpicture}[scale=0.75]
		\begin{scope}[scale=2,shift={(0,0)}]
			\draw[dashed] (0,0) -- (1,0) -- (1,1) -- (0,1) -- cycle;
			\coordinate (A) at (0,0);
			\coordinate (B) at (1/2,0);
			\coordinate (C) at (0,1);
			\draw[fill=rltgreen!30] (A) -- (B) -- (C) -- cycle;
			\fill (A) circle (1pt);
			\fill (B) circle (1pt);
			\fill (C) circle (1pt);
			\node at (1/2,-0.3) {$\frac{5+\sqrt{13}}{2\sqrt{3}}$};
		\end{scope}
		\begin{scope}[scale=2,shift={(1.2,0)}]
			\draw[dashed] (0,0) -- (1,0) -- (1,1) -- (0,1) -- cycle;
			\coordinate (A) at (0,1);
			\coordinate (B) at (1/2,0);
			\coordinate (C) at (1,1);
			\draw[fill=rltgreen!30] (A) -- (B) -- (C) -- cycle;
			\fill (A) circle (1pt);
			\fill (B) circle (1pt);
			\fill (C) circle (1pt);
			\node at (1/2,-0.3) {$\frac{2}{\sqrt{3}}$};
		\end{scope}
		\begin{scope}[scale=2,shift={(2.4,0)}]
			\draw[dashed] (0,0) -- (1,0) -- (1,1) -- (0,1) -- cycle;
			\coordinate (A) at (0,1);
			\coordinate (B) at (1/2,0);
			\coordinate (C) at (1,1/2);
			\draw[fill=rltgreen!30] (A) -- (B) -- (C) -- cycle;
			\fill (A) circle (1pt);
			\fill (B) circle (1pt);
			\fill (C) circle (1pt);
			\node at (1/2,-0.3) {$\sqrt{3}$};
		\end{scope}
		\begin{scope}[scale=2,shift={(3.6,0)}]
			\draw[dashed] (0,0) -- (1,0) -- (1,1) -- (0,1) -- cycle;
			\coordinate (A) at (1/2,0);
			\coordinate (B) at (1,0);
			\coordinate (C) at (1,1/2);
			\draw[fill=rltgreen!30] (A) -- (B) -- (C) -- cycle;
			\fill (A) circle (1pt);
			\fill (B) circle (1pt);
			\fill (C) circle (1pt);
			\node at (1/2,-0.3) {$\sqrt{3}$};
		\end{scope}
		\begin{scope}[scale=2,shift={(4.8,0)}]
			\draw[dashed] (0,0) -- (1,0) -- (1,1) -- (0,1) -- cycle;
			\coordinate (A) at (0,0);
			\coordinate (B) at (1/2,0);
			\coordinate (C) at (1/2,1);
			\coordinate (D) at (0,1);
			\draw[fill=rltgreen!30] (A) -- (B) -- (C) -- (D) -- cycle;
			\fill (A) circle (1pt);
			\fill (B) circle (1pt);
			\fill (C) circle (1pt);
			\fill (D) circle (1pt);
			\node at (1/2,-0.3) {$2$};
		\end{scope}
		\begin{scope}[scale=2,shift={(6.0,0)}]
			\draw[dashed] (0,0) -- (1,0) -- (1,1) -- (0,1) -- cycle;
			\coordinate (A) at (0,0);
			\coordinate (B) at (1/2,0);
			\coordinate (C) at (1/2,1/2);
			\coordinate (D) at (0,1);
			\draw[fill=cyan!30] (A) -- (B) -- (C) -- (D) -- cycle;
			\fill (A) circle (1pt);
			\fill (B) circle (1pt);
			\fill (C) circle (1pt);
			\fill (D) circle (1pt);
			\node at (1/2,-0.3) {$\frac{3+\sqrt{5}}{\sqrt{2}}$};
		\end{scope}
		\begin{scope}[scale=2,shift={(7.2,0)}]
			\draw[dashed] (0,0) -- (1,0) -- (1,1) -- (0,1) -- cycle;
			\coordinate (A) at (1/2,0);
			\coordinate (B) at (1,0);
			\coordinate (C) at (1,1/2);
			\coordinate (D) at (1/2,1/2);
			\draw[fill=cyan!30] (A) -- (B) -- (C) -- (D) -- cycle;
			\fill (A) circle (1pt);
			\fill (B) circle (1pt);
			\fill (C) circle (1pt);
			\fill (D) circle (1pt);
			\node at (1/2,-0.3) {$1$};
		\end{scope}
	\end{tikzpicture}
	
	\caption{Shape-regularity constants of selected elements occurring in the refinement of $[0,1]^2$, see Figure~\ref{fig:QRG}.}
	\label{fig:shape-regularities}
\end{figure}

Ali, Funken, and Schmidt showed the following grading estimate for P-quadrilateral initial meshes in~\cite{AliFunkenSchmidt2022}.
 
\begin{lemma}[{\cite[Lem.~4.5]{AliFunkenSchmidt2022}}]
  \label{lem:AFS-grading}
  Let $\mathcal{T}_0$ be a P-quadrilateral initial mesh.  
  Then, the refinement strategies Q-RG and Q-RB generate a family of meshes with grading~${\gamma \leq 2}$. 
  The constant in~\eqref{eq:hT} for the family of generated meshes depends only on~$\mathcal{T}_0$, specifically on the constant in~\eqref{eq:hT} for~$\mathcal{T}_0$ and on the total number of vertices~$\# \mathcal{V}(\mathcal{T}_0)$.
\end{lemma}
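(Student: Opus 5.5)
This lemma is cited from \cite[Lem.~4.5]{AliFunkenSchmidt2022}. The plan below follows the Carstensen-type regularised mesh size argument used there \cite{Carstensen2002,Carstensen2004,GaspozHeineSiebert2016}.

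\emph{Generation levels.} Since $\mathcal{T}_0$ consists of parallelograms and red refinement of a parallelogram yields four congruent parallelograms, every element $T$ of a generated mesh has a generation $\ell(T)$. For red children this is the number of red refinements leading from $T_0\in\mathcal{T}_0$ to $T$. Green and blue elements get the level of their parent plus one (or half-integer levels, whichever bookkeeping makes the comparison cleanest). Then $\diameter(T)\eqsim 2^{-\ell(T)}\diameter(T_0)$. The constants depend only on the finitely many shapes in Figure~\ref{fig:shape-regularities} and on $\mathcal{T}_0$.

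\emph{Level bound for neighbours.} The key combinatorial step is the following. For neighbours $T\cap T'\neq\emptyset$ in any mesh produced by Q-RG/Q-RB one has $|\ell(T)-\ell(T')|\le 1$ for edge-neighbours, with a controlled bound for vertex-neighbours. This is proved by induction over the refinement steps. The closure step of Q-RG/Q-RB only allows one hanging node per edge before it enforces a red refinement of the coarser neighbour, and green/blue elements are removed before being refined further. One checks that each refinement step preserves the invariant, going case by case through red, green and blue patterns.

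\emph{Regularised mesh size.} Define $h|_T$ as an average of $2^{-\ell}$ over the vertex patches of $T$, in the spirit of \cite{Carstensen2002}. One option is to set nodal values $h_z:=\min_{T\ni z}\diameter(T)$, or a mean, and let $h|_T:=\max_{z\in T}h_z$. Neighbouring elements share a vertex, so this averaging turns the level-difference bound into the quotient bound~\eqref{def:grading} with $\gamma\le 2$. The factor $2$ is exactly one level halving. The equivalence~\eqref{eq:hT} holds with a constant depending on $\mathcal{T}_0$ through the diameters of initial elements. It also depends on the number of vertices, since a chain of vertex-neighbours across initial elements of different sizes must be bounded, and $\#\mathcal{V}(\mathcal{T}_0)$ bounds the length of such chains.

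\emph{Main obstacle.} I expect the main obstacle to be the case analysis in the level invariant. This is especially true at vertices where many elements with different refinement histories meet, and for blue elements, whose diameters are not exactly halved. The averaging must be chosen so that the sharp constant $2$, rather than $4$, survives.
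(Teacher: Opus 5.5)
There is a genuine gap in your regularisation step, and that step is where the content of the lemma lies. A local construction such as $h_z=\min_{T\ni z}\diameter(T)$ with $h|_T=\max_{z\in T}h_z$ only gives, for $T\cap T'\ni z_0$, the bound $h|_T/h|_{T'}\le \diameter(T)/\min_{S\ni z_0}\diameter(S)$. So the grading is governed by the spread of sizes inside a single vertex patch, and that spread is not one halving.

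Here is a concrete failure. Let $z$ be an initial vertex of valence $n$; P-quadrilateral meshes allow any $n$. Refine repeatedly only inside one initial element at $z$. The one-hanging-node closure then produces elements at $z$ whose generations drop by one per step around $z$. Hence two elements containing $z$, which are neighbours with $\metric(T,T')=1$, differ in diameter by a factor of order $2^{n/2}$. Your $h$ stays comparable to $\diameter$ on both, through their vertices away from $z$, so its grading is of that order, not $2$. More generally, a bound $m$ on generation jumps between neighbours yields grading $2^m$ at best. Since $\metric$ also counts vertex-neighbours, $m=1$ is not available (in the example $m\approx n/2$). The sentence ``the factor $2$ is exactly one level halving'' is therefore unjustified, and you flag this obstacle without resolving it.

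The missing idea is a non-local, Carstensen-type regularisation, as in the argument behind the cited result (cf.\ Remark~\ref{rem:general-grading}), for instance $h|_T:=\min_{T'\in\mathcal{T}}2^{\metric(T,T')}\diameter(T')$. For this choice, $h|_T\le\diameter(T)$ and $h|_T\le 2\,h|_{T'}$ for neighbours follow at once from the triangle inequality for $\metric$, so grading $2$ comes for free. All the work moves to the lower bound $h|_T\gtrsim\diameter(T)$. This amounts to the global estimate $\ell(T')\le\ell(T)+\metric(T,T')+C$ along arbitrary chains, i.e.\ generation jumps cannot accumulate; no neighbour-wise bound gives this. It is proved by following chains of edges. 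Because the configuration at initial vertices is arbitrary, chains are split into subchains that contain initial vertices only as endpoints, and each subchain costs an additive constant. That, and not the varying sizes of initial elements, is why the constant in \eqref{eq:hT} depends on $\#\mathcal{V}(\mathcal{T}_0)$. Your explanation of this dependence also does not fit your own construction, whose equivalence constants would be purely local.
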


\begin{remark}
  \label{rem:general-grading}
  The proof of Lemma~\ref{lem:AFS-grading} applies directly to quadrilateral initial meshes, since the refinement strategy and the proof of the grading properties only rely on properties of edges and not on the exact shape of the quadrilaterals. 
  Note that in the proof in \cite{AliFunkenSchmidt2022} chains of edges passing through vertices $\mathcal{V}(\mathcal{T}_0)$ are split into subchains containing initial vertices only as their start or end vertex. 
  This strategy is responsible for the fact that the constant in~\eqref{eq:hT} depends on~$\# \mathcal{V}(\mathcal{T}_0)$. 
  We expect that a similar technique as the one developed in~\cite{DieningStornTscherpel2026} improves the constant in~\eqref{eq:hT} in the sense that it only depends on the maximal edge valence of initial vertices~$\mathcal{V}(\mathcal{T}_0)$.
\end{remark}

In the following we shall assume that we have an initial mesh $\mathcal{T}_0$ and an adaptive refinement scheme which generates a family of hybrid meshes $(\mathcal{T}_j)_{j \in \mathbb{N}_0}$ as in Subsection~\ref{sec:mesh} and a grading constant $\gamma \geq 1$ such that
\begin{align}\label{est:grading-family}
  \gamma_{\mathcal{T}_j} \leq \gamma \qquad \text{ for any } j \in \mathbb{N}_0.
\end{align}
By Lemma~\ref{lem:AFS-grading} and Remark~\ref{rem:general-grading} for Q-RB and Q-RG refinements the grading constant is 
\begin{align}\label{eq:grading-Q-ex}
  \gamma_{\textup{Q-RB}} =  \gamma_{\textup{Q-RG}} = 2
\end{align}
for quadrilateral initial meshes.

In~\cite{AliFunkenSchmidt2022} the authors use this grading estimate for Q-RB and Q-RG applied to P-quadrilateral initial meshes to show $W^{1,2}$-stability for $K=2,\dots,9$ in~\cite[Thm.~4.6, 4.9]{AliFunkenSchmidt2022}. 
The special case of P-quadrilateral initial meshes leads to families of meshes consisting of parallelograms, trapezoids and triangles but not general convex quadrilaterals. 
This restriction simplifies the analysis.
We will extend the results in~\cite{AliFunkenSchmidt2022} in several directions: 
We allow general hybrid meshes with grading, which includes in particular Q-RB and Q-RG starting from quadrilateral initial meshes. Moreover, we extend the range of polynomials to $K \geq 2$. 
Finally, we also present $L^p$- and $W^{1,p}$-stability results, see Theorem~\ref{thm:stability} and Corollary~\ref{cor:stability} below.


\section{\texorpdfstring{Stability of the $L^2$-projection}{Stability of the L2-projection}}
\label{sec:stab}

In this section we investigate the stability of the $L^2$-projection onto $\mathcal{L}^1_K(\mathcal{T})$ in Sobolev and Lebesgue norms.

Let $\mathcal{T}$ be a hybrid mesh of a polyhedral domain~$\Omega \subset \RR^2$ as described in Section~\ref{sec:mesh-funct-spac} and let~$K \in \setN$. 
Let~$\Pi \colon L^2(\Omega) \to \mathcal{L}^1_K(\mathcal{T})$ denote the $L^2$-projection onto $\mathcal{L}^1_K(\mathcal{T})$, i.e.,  it satisfies
\begin{align}
  \label{eq:defPi}
  \skp{\Pi u}{v_h} =
  \skp{u}{v_h} \qquad \text{for all $v_h \in \mathcal{L}^1_K(\mathcal{T})$}
\end{align}
for any $u \in L^2(\Omega)$. 
Since $\mathcal{L}^1_K(\mathcal{T}) \subset L^\infty(\Omega)$, the $L^2$-projection is also well-defined on~$L^1(\Omega)$. 
Naturally, $\Pi$ is stable on~$L^2(\Omega)$ meaning that it satisfies $\norm{\Pi u}_2 \leq \norm{u}_2$.
Stability in other norms such as $W^{1,2}(\Omega)$, or $L^p(\Omega)$ and $W^{1,p}(\Omega)$ for $p \neq 2$ requires conditions on the mesh, see~\cite[Sec.~7]{BankYserentant2014}.

Stability in Lebesgue and Sobolev spaces can be obtained from certain weighted $L^2$-estimates of the form
\begin{align}
  \label{eq:weighted}
  \norm{h^{-\alpha} \Pi u}_2 \lesssim
  \norm{h^{-\alpha} u}_2
\end{align}
for some~$\alpha>0$, with $h$ a mesh size function as in \eqref{eq:hT}. 
In particular, the case~$\alpha=1$ implies $W^{1,2}$-stability, see~\cite[Sec.~4.3]{DieningStornTscherpel2021}.

The proof of weighted estimates as in \eqref{eq:weighted} relies on decay properties of~$\Pi$. 
Those can be proven by using a local operator with which one can approximate~$\Pi$.  
This approach has been introduced in~\cite{BankYserentant2014} and was generalised in~\cite{DieningStornTscherpel2021,DieningStornTscherpel2026}. 
In the following we extend this approach to hybrid meshes.

\subsection{Approximating operator}
\label{sec:appr-oper}

In this section we construct a local, self-adjoint operator~$C\colon L^2(\Omega) \to \mathcal{L}^1_K(\mathcal{T})$ that approximates the $L^2$-projection~$\Pi$. 
The definition of~$C$ is analogous to the one in~\cite{DieningStornTscherpel2021} for simplicial meshes.

Our construction is based on the decomposition~\eqref{eq:space-decomposition}. Let $(\phi_i)_{i\in \mathcal{V}}$ denote the nodal basis of~$\mathcal{L}^1_1(\mathcal{T})$, which is a partition of unity. 
For each~$i \in \mathcal{V}$, $\phi_i$ is a weight on~$\Omega_i = \support \phi_i$. 
This allows to define the weighted scalar products 
\begin{align}
  \label{eq:scalarproduct-phii}
  \skp{u}{v}_{\phi_i} \coloneqq \int_{\Omega_i} u v \, \phi_i\dx.
\end{align}
Let $C_i \colon L^{2}(\Omega)\to\mathcal{L}^1_{K-1}(\omega_{i})$ denote the orthogonal projection with respect to $\skp{\cdot}{\cdot}_{\phi_{i}}$ onto $\mathcal{L}^1_{K-1}(\omega_i)$, i.e.,
\begin{align}
  \label{eq:Ci}
  \langle C_{i}u,v_{K-1}\rangle_{\phi_{i}}=\langle u,v_{K-1}\rangle_{\phi_{i}}\quad \text{for all } v_{K-1}\in\ml_{K-1}^{1}(\omega_i),
\end{align}
and for any $u \in L^2(\Omega)$.  
Note that $\Pi$ also satisfies~\eqref{eq:Ci} in the sense that
\begin{align}
  \label{eq:Ci-Pi}
  \langle \Pi u,v_{K-1}\rangle_{\phi_{i}}=\langle u,v_{K-1}\rangle_{\phi_{i}}\quad \text{for all } v_{K-1}\in\ml_{K-1}^{1}(\omega_i),
\end{align}
and for any $u \in L^2(\Omega)$.  
This property of~$\Pi$ is the essential feature for our definition of the operators~$C_i$.

We define the global operator $C\colon L^{2}(\Omega)\rightarrow\LK(\mathcal{T})$ as the (weighted) sum of the local operators $C_{i}$
\begin{align}
  \label{eq:defC}
  C \coloneqq \sum_{i\in\mathcal{V}}\phi_{i}C_{i},
\end{align}
which resembles~\eqref{eq:space-decomposition}. 
Exactly as in~\cite[Thm.~3.2, Lem.~3.4 and Lem.~3.5]{DieningStornTscherpel2021} we obtain the following basic properties of $C$.

\begin{proposition}
  \label{pro:C}
  The operator~$C$ has the following properties: 
  \begin{enumerate}
  \item $C$ is linear and self-adjoint with respect to the $L^2$-inner product~$\skp{\cdot}{\cdot}$.
  \item $C$ is the identity on~$\mathcal{L}^1_{K-1}(\mathcal{T})$.
  \item $u-Cu$ is orthogonal to $\mathcal{L}^1_{K-1}(\mathcal{T})$ for all $u \in L^2(\Omega)$.
  \item If $v \in L^2(\Omega)$ is supported on a collection $L \subset \mathcal{T}$ of elements, then the support of~$Cv$ is at most one layer larger, i.e., it is contained in  $\{ T \in \mathcal{T} \colon \metric(T,L)\leq 1\}$.
  \item For all $u\in L^2(\Omega)$ there holds
    \begin{align*}
      \skp{Cu}{u} = \sum_{i \in \mathcal{V}} \skp{C_iu}{C_iu}_{\phi_i} \leq \sum_{i \in \mathcal{V}} \skp{u}{u}_{\phi_i} =\norm{u}_2^2.
    \end{align*}
  \end{enumerate}
\end{proposition}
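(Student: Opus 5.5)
The plan is to verify the five properties directly from the definitions \eqref{eq:Ci} and \eqref{eq:defC}, using only that $(\phi_i)_{i\in\mathcal{V}}$ is a nonnegative partition of unity with $\support\phi_i=\overline{\Omega_i}$ and that $\mathcal{L}^1_{K-1}(\mathcal{T})|_{\Omega_i}\subset\mathcal{L}^1_{K-1}(\omega_i)$. The non-affine bilinear maps on quadrilaterals play no role: the argument is purely algebraic, as in~\cite{DieningStornTscherpel2021}.

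For (a), linearity is immediate. For self-adjointness I would compute, for $u,v\in L^2(\Omega)$,
\begin{align*}
  \skp{Cu}{v}=\sum_{i\in\mathcal{V}}\int_{\Omega_i}(C_iu)\,v\,\phi_i\dx=\sum_{i\in\mathcal{V}}\skp{C_iu}{v}_{\phi_i}=\sum_{i\in\mathcal{V}}\skp{C_iu}{C_iv}_{\phi_i},
\end{align*}
where the last step applies \eqref{eq:Ci} to $v$ with test function $C_iu\in\mathcal{L}^1_{K-1}(\omega_i)$. The right-hand side is symmetric in $u$ and $v$. Taking $v=u$ gives the identity in (e). The inequality in (e) then follows because $C_i$ is an orthogonal projection in $\skp{\cdot}{\cdot}_{\phi_i}$, so $\skp{C_iu}{C_iu}_{\phi_i}\le\skp{u}{u}_{\phi_i}$, and because $\sum_i\phi_i=1$ gives $\sum_i\skp{u}{u}_{\phi_i}=\norm{u}_2^2$.

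For (b), let $u\in\mathcal{L}^1_{K-1}(\mathcal{T})$. Its restriction to $\Omega_i$ lies in $\mathcal{L}^1_{K-1}(\omega_i)$, so $C_iu=u$ on $\Omega_i$. Hence $Cu=\sum_i\phi_iu=u$. For (c), take $v\in\mathcal{L}^1_{K-1}(\mathcal{T})$ and use (a) and (b): $\skp{u-Cu}{v}=\skp{u}{v}-\skp{u}{Cv}=0$.

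For (d), suppose $v$ is supported on $L$. Then $C_iv=0$ whenever $\Omega_i$ does not meet any element of $L$, since then $\skp{v}{\cdot}_{\phi_i}=0$. Thus only vertices $i$ lying in some $T\in L$ contribute, and $\phi_iC_iv$ is supported in $\omega_i$. Every $T'\in\omega_i$ shares the vertex $i$ with $T$, so $\metric(T',L)\le1$.

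The only point requiring some care is that $C_i$ is well defined, i.e.\ that $\skp{\cdot}{\cdot}_{\phi_i}$ is a genuine inner product on $\mathcal{L}^1_{K-1}(\omega_i)$. This holds because $\phi_i>0$ on the open set $\Omega_i$, so a nonzero element of the finite-dimensional space $\mathcal{L}^1_{K-1}(\omega_i)$ cannot have zero weighted norm. Beyond this, no step is a real obstacle.
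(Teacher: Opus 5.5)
Your proposal is correct and follows essentially the argument the paper relies on. The paper gives no separate proof; it refers to the simplicial case in \cite{DieningStornTscherpel2021} (Thm.~3.2, Lem.~3.4 and~3.5), which is the same algebraic verification via $\skp{Cu}{v}=\sum_{i}\skp{C_iu}{C_iv}_{\phi_i}$, the partition of unity and the projection property of the $C_i$, and, as you note, it does not depend on the element geometry, so it carries over unchanged to hybrid meshes.
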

To obtain decay estimates for~$\Pi$ as in~\cite{BankYserentant2014,DieningStornTscherpel2021} it remains to estimate from above the condition number\[\operatorname{cond}_2(C|_{\mathcal{L}^1_K(\mathcal{T})}) = \frac{\lambda_{\max}(C|_{\mathcal{L}^1_K(\mathcal{T})})}{\lambda_{\min}(C|_{\mathcal{L}^1_K(\mathcal{T})})}.\] 
By Proposition~\ref{pro:C} we know that $\lambda_{\max}(C|_{\mathcal{L}^1_K(\mathcal{T})})\leq 1$, so we only need an estimate for the smallest eigenvalue of $C|_{\mathcal{L}^1_K(\mathcal{T})}$. 
This is achieved by means of a decomposition operator in the next subsection.

\subsection{Decomposition operator}
\label{sec:decomp-oper}

In this subsection we introduce a decomposition operator of functions in~$\mathcal{L}^1_K(\mathcal{T})$. The general idea goes back to~\cite{BankYserentant2014}. Here, we use a decomposition operator as in~\cite{DieningStornTscherpel2021}, which is based on the function space decomposition~\eqref{eq:space-decomposition}.

In the following we construct decomposition operators $\frD_i \colon \mathcal{L}^1_K(\mathcal{T}) \to \mathcal{L}^1_{K-1}(\omega_i)$ such that
\begin{align}\label{def:decomp}
  v_K = \sum_{i \in \mathcal{V}} \phi_i \frD_i v_K \qquad \text{for all $v_K \in \mathcal{L}^1_K(\mathcal{T})$}.
\end{align}
Exactly as in~\cite[Lem.~3.6 and Prop.~3.14]{DieningStornTscherpel2021} we have the following statement.
\begin{proposition}
  \label{pro:C-lower}
  If there exists $K_1 > 0$ such that
  \begin{align}
    \label{eq:C-lower}
    \sum_{i \in \mathcal{V}} \int_\Omega \phi_i \abs{\frD_i v_K}^2 \dx \leq K_1 \norm{v_K}_2^2
  \end{align}
  for all $v_K \in \mathcal{L}^1_K(\mathcal{T})$,
  then one has that
  \begin{align*}
    \operatorname{cond}_2(C|_{\mathcal{L}^1_K(\mathcal{T})})\leq K_1.
  \end{align*}
\end{proposition}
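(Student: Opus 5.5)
Since Proposition~\ref{pro:C} already gives $\lambda_{\max}(C|_{\mathcal{L}^1_K(\mathcal{T})})\leq 1$, it suffices to show $\lambda_{\min}(C|_{\mathcal{L}^1_K(\mathcal{T})})\geq 1/K_1$. Equivalently, we need $\skp{Cv_K}{v_K}\geq K_1^{-1}\norm{v_K}_2^2$ for all $v_K\in\mathcal{L}^1_K(\mathcal{T})$. By property (1) of Proposition~\ref{pro:C}, $C$ restricted to $\mathcal{L}^1_K(\mathcal{T})$ is self-adjoint in $\skp{\cdot}{\cdot}$. Hence its spectrum is real and $\lambda_{\min}$ is the minimum of the Rayleigh quotient. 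By property (5) of Proposition~\ref{pro:C}, $\skp{Cv_K}{v_K}=\sum_i \skp{C_iv_K}{C_iv_K}_{\phi_i}$ is nonnegative. So the goal reduces to a lower bound on $\sum_i\norm{C_iv_K}_{\phi_i}^2$.

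The key step is to use the decomposition \eqref{def:decomp} and the orthogonality defining $C_i$ in \eqref{eq:Ci}. For $v_K\in\mathcal{L}^1_K(\mathcal{T})$ we write
\begin{align*}
  \norm{v_K}_2^2=\skp{v_K}{v_K}=\sum_{i\in\mathcal{V}}\int_\Omega \phi_i\, v_K\, \frD_i v_K\dx=\sum_{i\in\mathcal{V}}\skp{v_K}{\frD_iv_K}_{\phi_i}.
\end{align*}
Since $\frD_iv_K\in\mathcal{L}^1_{K-1}(\omega_i)$, \eqref{eq:Ci} lets us replace $v_K$ by $C_iv_K$ in each term. This gives $\norm{v_K}_2^2=\sum_i\skp{C_iv_K}{\frD_iv_K}_{\phi_i}$.

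We then apply the Cauchy--Schwarz inequality, first in each weighted inner product $\skp{\cdot}{\cdot}_{\phi_i}$ (legitimate since $\phi_i\geq 0$) and then in the sum over $i$:
\begin{align*}
  \norm{v_K}_2^2\leq\Big(\sum_i\norm{C_iv_K}_{\phi_i}^2\Big)^{\frac12}\Big(\sum_i\int_\Omega\phi_i\abs{\frD_iv_K}^2\dx\Big)^{\frac12}\leq \skp{Cv_K}{v_K}^{\frac12}\,K_1^{\frac12}\norm{v_K}_2,
\end{align*}
where the last step uses \eqref{eq:C-lower}. Dividing by $\norm{v_K}_2$ and squaring yields $\norm{v_K}_2^2\leq K_1\skp{Cv_K}{v_K}$. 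Hence $\lambda_{\min}\geq K_1^{-1}$, and combined with $\lambda_{\max}\leq1$ this gives $\operatorname{cond}_2\leq K_1$.

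There is no serious obstacle. The only points to check are these:
\begin{itemize}
  \item $C$ maps $\mathcal{L}^1_K(\mathcal{T})$ into itself, which holds since $\phi_i\mathcal{L}^1_{K-1}(\omega_i)\subset\mathcal{L}^1_K(\mathcal{T})$ by \eqref{eq:space-decomposition}.
  \item The weighted integrals are supported on $\Omega_i$, so that $\int_\Omega$ and $\int_{\Omega_i}$ agree.
\end{itemize}
Both hold verbatim for hybrid meshes, since they use only that $(\phi_i)$ is a nonnegative partition of unity with support $\Omega_i$.
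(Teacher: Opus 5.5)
Your proof is correct and is essentially the argument the paper imports from \cite[Lem.~3.6]{DieningStornTscherpel2021}. You obtain the identity $\norm{v_K}_2^2=\sum_{i}\skp{C_iv_K}{\frD_iv_K}_{\phi_i}$ from \eqref{def:decomp} and \eqref{eq:Ci}; Cauchy--Schwarz together with Proposition~\ref{pro:C}(5) then gives $\lambda_{\min}\geq K_1^{-1}$, and combining this with $\lambda_{\max}\leq 1$ yields the bound $\operatorname{cond}_2\leq K_1$.
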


Let us now construct the decomposition operators~$\frD_i$ for general hybrid meshes.
These operators are defined locally on each element~$T \in \mathcal{T}$. 
Denoting  $\frD_i^T \coloneqq \frD_i|_{T} $ we aim for an estimate of the form
\begin{align}\label{est:DiT-local}
  \sum_{i \in \mathcal{V}(T)}\int_T \phi_i \abs{\frD_i^T v_K}^2 \dx 
  \leq K_1 \int_{T} \abs{v_K}^2 \dx   
  \qquad \text{ for all } v_K \in \mathcal{P}_K(T),
\end{align}
and any $T \in \mathcal{T}$.  
We will show that $\frD_i v_K \in \mathcal{L}^1_{K-1}(\omega_i)$, i.e., that the local definitions are continuous across edges, and that~\eqref{est:DiT-local} implies the global estimate~\eqref{eq:C-lower}.  
In fact, we only need to extend the construction in~\cite{DieningStornTscherpel2021} for triangles to general quadrilaterals.

\subsubsection{Decomposition on triangles}
We begin by reviewing the decomposition for a triangle~$T \in \mathcal{T}$ as introduced in~\cite[Sec.~3.4]{DieningStornTscherpel2021}.
Let $\lambda_0,\lambda_1,\lambda_2$ denote the bary\-centric coordinates of~$T$. 
We consider the polynomials $\lambda^\beta = \lambda_0^{\beta_0} \lambda_1^{\beta_1} \lambda_2^{\beta_2}$ for all multi-indices $\beta=(\beta_0,\beta_1,\beta_2) \in \setN_0^3$. Let $e_0=(1,0,0)^\top$, $e_1=(0,1,0)^\top$ and $e_2=(0,0,1)^\top$ be the canonical unit vectors.  
Let $\{x_\alpha \colon  \alpha \in \setN_0^3,  \abs{\alpha}=K \}$ denote the set of Lagrange nodes of~$\mathcal{L}^1_K(T)$. We define for $i \in \{0,1,2\}$ and $\alpha \in \mathbb{N}^3_0$ with $\abs{\alpha} = K$  the operator
\begin{align}
  \label{eq:def-Di-triangle}
  \frD_i^T(\lambda^\alpha) \coloneqq \frac{\alpha_i}{K} \lambda^{\alpha-e_i}
  =
  \begin{cases}
    \lambda_i(x_\alpha) \lambda^{\alpha-e_i} &\qquad \text{if $\alpha_i >0$},
    \\
    0 &\qquad \text{if $\alpha_i=0,$}
  \end{cases}
\end{align}
with the convention $0 \cdot \lambda^{\alpha-e_i}=0$.
By definition it follows that
\begin{align}
  \label{eq:Di-identity}
  \lambda_i \frD_i^T(\lambda^\alpha) = \lambda_i(x_\alpha) \lambda^\alpha \qquad \text{for $i= \in \{ 0,1,2 \}$\, and $\abs{\alpha}=K$.}
\end{align}
Note that this represents a local version of the decomposition in~\eqref{def:decomp}.
From the proof of Prop.~3.14 in~\cite{DieningStornTscherpel2021} it follows that for any triangle $T \in \mathcal{T}$ one has
\begin{align}
  \label{eq:Di-est-triangle}
  \sum_{i=0}^2\int_T \lambda_i \abs{\frD_i^T v_K}^2 \dx \leq \tfrac{2K+2}{K} \int_{T} \abs{v_K}^2 \dx,
\end{align}
where $i \in \{0,1,2\}$ corresponds to the vertex of $T$ defined by $\lambda_i = 1$. 
Indeed, this verifies~\eqref{est:DiT-local} for triangles. 

\begin{lemma}[\cite{DieningStornTscherpel2021}]\label{lem:K1-triangle}
  If $T \in \mathcal{T}$ is a triangle, then~\eqref{est:DiT-local} holds with minimal constant
  \begin{align*}
    K_1 = \frac{2K+2}{K}.
  \end{align*}
\end{lemma}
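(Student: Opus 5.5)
The statement has two parts: the estimate \eqref{est:DiT-local} holds with $K_1=\frac{2K+2}{K}$, and no smaller constant works. Both are affine invariant. The operator $\frD_i^T$ is defined through barycentric coordinates, and the ratio of the two integrals is unchanged under $B_T$ up to the common factor $\abs{J_T}$. So I will work on a fixed reference triangle and write $v_K=\sum_{\abs{\alpha}=K} c_\alpha \lambda^\alpha$ in the Bernstein basis.

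The main tool is the Dirichlet integral formula
\begin{align*}
\int_T \lambda^\beta \dx = 2\abs{T}\,\frac{\beta!}{(\abs{\beta}+2)!}.
\end{align*}
From it I get the Gram matrix of the Bernstein basis of degree $K$, with entries $M^K_{\alpha\alpha'} = 2\abs{T}\frac{(\alpha+\alpha')!}{(2K+2)!}$. For the left-hand side I write
\begin{align*}
\frD_i^T v_K=\sum_\alpha \frac{\alpha_i}{K} c_\alpha \lambda^{\alpha-e_i}, \qquad \lambda_i\abs{\frD_i^Tv_K}^2=\sum_{\alpha,\alpha'}\frac{\alpha_i\alpha'_i}{K^2}c_\alpha c_{\alpha'}\lambda^{\alpha+\alpha'-e_i}.
\end{align*}
Integrating termwise gives $(\alpha+\alpha'-e_i)! = (\alpha+\alpha')!/(\alpha_i+\alpha'_i)$ over the denominator $(2K+1)!$. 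Summing over $i$, the left-hand side becomes the quadratic form of the matrix with entries
\begin{align*}
2\abs{T}\,\frac{(\alpha+\alpha')!}{(2K+1)!}\sum_{i}\frac{\alpha_i\alpha_i'}{K^2(\alpha_i+\alpha_i')}.
\end{align*}
Comparing with $M^K$, the claim reduces to a generalized eigenvalue bound: this matrix is at most $K_1 M^K$, and equality is attained for some vector.

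To prove the bound I will use a convexity inequality instead of a matrix computation. Since $\lambda_i\ge 0$ and $\sum_i\lambda_i=1$, the functions $\lambda_i$ act as weights in Jensen's inequality. Writing $v_K=\sum_i \lambda_i\frD_i^Tv_K$ pointwise, which holds by \eqref{eq:Di-identity} summed over $i$, gives only the reverse inequality $\abs{v_K}^2\le\sum_i\lambda_i\abs{\frD_i^Tv_K}^2$. So the upper bound needs more than pointwise reasoning.

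For the upper bound I plan to use degree raising. The function $\frD_i^Tv_K$ lies in $\mathcal{P}_{K-1}$, and the map $v_K\mapsto\frD_i^Tv_K$ is essentially the normalized derivative $\frac1K\partial_{\lambda_i}$ in homogeneous coordinates. The integral $\int\lambda_i w^2$ of a polynomial $w$ of degree $K-1$ equals an integral over the reference simplex of one dimension higher. This suggests the bound $\sum_i \int\lambda_i\abs{\frac1K\partial_i v}^2 \le \frac{2K+2}{K}\int v^2$, a Bernstein–Markov type inequality for homogeneous polynomials on the simplex. To prove it I would diagonalize both forms in the orthogonal (Jacobi/Appell) basis of degree $K$ on the simplex. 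These forms commute with the symmetric group of the triangle, and the map is degree-lowering. On each degree-$n$ orthogonal component the quotient can then be computed explicitly. I expect it to be maximized on the top component $n=K$, with value $\frac{2K+2}{K}$.

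Minimality follows from the same computation: an eigenfunction in the top component attains the quotient. As a sanity check, for $K=1$ the choice $v=1$ gives $\frD_i^T v=1$, so the left-hand side is $\sum_i\int\lambda_i=\abs{T}$ and the quotient is $1$. The maximizer must therefore be a different degree-$1$ polynomial, consistent with it lying in a top-degree orthogonal component (for example $\lambda_0-\lambda_1$), whose quotient should equal $4$.

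The main obstacle is the explicit diagonalization, that is, verifying that the generalized eigenvalues are ordered and that the largest is exactly $\frac{2K+2}{K}$. I would first try the one-dimensional analogue on an interval with Legendre/Jacobi polynomials. There the eigenvalues are computable in closed form, and I would then lift that computation to the triangle using the product structure of Koornwinder polynomials.
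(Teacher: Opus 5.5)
\paragraph{A gap at the central step.} Your reduction (affine invariance, the Bernstein Gram matrices) is correct. So is your expected picture: the extremal functions should form the top orthogonal component $\mathcal{Z}_K$, the degree-$K$ polynomials that are $L^2(T)$-orthogonal to $\mathcal{P}_{K-1}(T)$. This agrees with what the paper imports from \cite{DieningStornTscherpel2021}: the bound from the proof of their Prop.~3.14, and the maximisers in $\mathcal{Z}_K$ from their Prop.~3.16. But your proposal stops where the proof has to begin. The whole content of the lemma is three facts about the form $\sum_i\int_T\lambda_i\,\frD_i^Tv\,\frD_i^Tw\dx$:
\begin{itemize}
\item it is diagonal with respect to $\mathcal{P}_K(T)=\bigoplus_{n=0}^K\mathcal{Z}_n$;
\item it acts as a scalar on each $\mathcal{Z}_n$;
\item that scalar is largest for $n=K$, with value $\frac{2K+2}{K}$.
\end{itemize}
You only state that you expect this, and the reasons you sketch do not give it.
\begin{itemize}
\item Invariance under the symmetries of $T$ is too weak. $\mathcal{Z}_n$ has dimension $n+1$ and is reducible, and the same irreducible type occurs for different $n$ (the trivial representation occurs in both $\mathcal{Z}_0$ and $\mathcal{Z}_2$). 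So an invariant form may couple different degrees and need not be scalar on $\mathcal{Z}_n$.
\item $\frD_i^T$ is not degree-lowering for the filtration $\mathcal{P}_0\subset\dots\subset\mathcal{P}_K$. Already $\frD_i^T1=1$, and in general it maps $\mathcal{P}_n$ into $\mathcal{P}_n$.
\item The form does not separate in collapsed coordinates, so a one-dimensional Legendre computation does not simply lift through Koornwinder polynomials.
\end{itemize}

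\paragraph{The missing idea.} What you need is the quantitative version of your Jensen observation. Since $\lambda_i\ge 0$, $\sum_i\lambda_i=1$ and $\sum_i\lambda_i\frD_i^Tv=v$, the variance identity gives, pointwise,
\[
  \sum_{i=0}^{2}\lambda_i\abs{\frD_i^Tv}^2=\abs{v}^2+\sum_{0\le i<k\le 2}\lambda_i\lambda_k\abs{\frD_i^Tv-\frD_k^Tv}^2 .
\]
Because $\frD_i^T$ is $\frac1K\partial_{\lambda_i}$ applied to the homogeneous degree-$K$ representation of $v$, we have $\frD_i^Tv-\frD_k^Tv=\frac1K\partial_{ik}v$. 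Here $\partial_{ik}$ is the derivative along the edge vector $a_i-a_k$. So the lemma is equivalent to
\[
  \sum_{i<k}\int_T\lambda_i\lambda_k\abs{\partial_{ik}v}^2\dx\le K(K+2)\int_T\abs{v}^2\dx .
\]
Now set $L\coloneqq-\sum_{i<k}\partial_{ik}(\lambda_i\lambda_k\partial_{ik}\,\cdot\,)$.
\begin{itemize}
\item $L$ is symmetric on $L^2(T)$ with no boundary terms: $\lambda_i\lambda_k$ vanishes on the faces opposite $a_i$ and $a_k$, and $a_i-a_k$ is tangent to the remaining face.
\item $L$ maps each $\mathcal{P}_n$ into itself, so by symmetry it leaves each $\mathcal{Z}_n$ invariant.
\item Comparing leading homogeneous parts shows that $Lz-n(n+2)z\in\mathcal{P}_{n-1}\cap\mathcal{Z}_n=\{0\}$ for $z\in\mathcal{Z}_n$.
\end{itemize}
Hence, for $v=\sum_n z_n$ with $z_n\in\mathcal{Z}_n$, the left-hand side of \eqref{est:DiT-local} equals $\sum_{n=0}^K\big(1+\frac{n(n+2)}{K^2}\big)\int_T\abs{z_n}^2\dx$. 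This gives the bound $\frac{2K+2}{K}$, attained exactly on $\mathcal{Z}_K$, which is the eigenfunction statement the paper quotes. It also reproduces your values $1$ for $v=1$ and $4$ for $\lambda_0-\lambda_1$ when $K=1$.
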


\subsubsection{Decomposition on a general convex quadrilateral}
\label{sec:decomp-gener-conv}

We continue with the case that $T \in \mathcal{T}$ is a (strictly) convex quadrilateral.
As in the definition of~$\mathcal{L}_K(T)$ we use the tensor product structure.
For this purpose, we define the local decomposition first for the unit square $\square \coloneqq I \times I$ for  $I\coloneqq [0,1]$. 
Recall from Section~\ref{sec:fct-space} that for $K \in \mathbb{N}$ we have
$ \mathcal{Q}_K(\square) = \mathcal{P}_K(I) \otimes \mathcal{P}_K(I)$. 
We denote by~$\bar{\lambda}_0,\bar{\lambda}_1$ the (one-dimensional) barycentric coordinates of~$I$.
We define the one-dimensional decomposition operator $\frD^I \colon \mathcal{P}_K(I) \to \mathcal{P}_{K-1}(I)$ by
\begin{align}\label{def:DI-1D}
  \frD^I_j(\bar{\lambda}^\alpha) = \frac{\alpha_j}{K} \bar{\lambda}^{\alpha-\bar e_j},
\end{align}
where $\alpha \in \setN_0^2$ with $\abs{\alpha}=K$ and $\bar{e}_0=(1,0)^\top$ and $\bar{e}_1=(0,1)^\top$.
Since this is the special case of~\eqref{eq:def-Di-triangle} for one space dimension, by~\eqref{eq:Di-identity} we have that
\begin{align}
  \label{eq:Di-identity-interval}
  \bar{\lambda}_j \frD_j^I(\bar{\lambda}^\alpha) = \bar{\lambda}_j(x_\alpha) \bar{\lambda}^\alpha \qquad \text{for $j \in \{0,1\}$ and $\abs{\alpha}=K$.}
\end{align}

Let $\mathcal{V}(\square)$ denote the four vertices of~$\square$, namely $(0,0)^\top, (0,1)^\top, (1,0)^\top, (1,1)^\top$.
Then for $i =(i_1,i_2)^\top\in \mathcal{V}(\square)$ we define $\frD^\square_i\colon \mathcal{Q}_K(\square) \to \mathcal{Q}_{K-1}(\square)$ by
\begin{align}
  \label{eq:frD-tensor}
  \frD^\square_i(\bar{\lambda}^\alpha \otimes \bar{\lambda}^\beta)
  \coloneqq
  \frD^I_{i_1}(\bar{\lambda}^\alpha) \otimes \frD^I_{i_2}(\bar{\lambda}^\beta),
\end{align}
where $(f \otimes g)(x_1,x_2) \coloneqq f (x_1)\, g(x_2)$ and $\alpha, \beta \in \mathbb{N}_0^2$ with $\abs{\alpha}=\abs{\beta}=K$.
The decomposition property is inherited from the one-dimensional decomposition.

For general quadrilaterals $ T \in \mathcal{T}$ the local decomposition operators~$\frD_i^T$ with $i \in \mathcal{V}(T)$ are defined as the pullback of the decompositions on~$\square$, i.e.,
\begin{align}\label{def:DT-quad-local}
  \frD^T_i(v_K \circ B_T^{-1})  \coloneqq  (\frD^\square_{B_T^{-1}(i)}v_K) \circ B_T^{-1},
\end{align}
where $B_T\colon [0,1]^2 \to T$ is an affine bilinear transformation as above and $v_K \in \mathcal{Q}_K(\square)$. Recall from Section~\ref{sec:mesh} that $J_T$ is the Jacobian determinant of $B_T$, which is chosen to be positive. 
Hence, by transformation~\eqref{est:DiT-local} is equivalent to
\begin{align}\label{est:K1-gen}
  \sum_{i \in \mathcal{V}(\square)} \int_{\square} \phi_i \abs{\frD_{i}^{\square}(v_K)}^2 {J_T} \dx
  \leq
  K_1  \int_{\square} \abs{v_K}^2 {J_T} \dx,
\end{align}
for any $v_K \in \mathcal{Q}_K(\square)$. 
We now proceed to determine $K_1$ such that~\eqref{est:K1-gen} holds. 

For general convex quadrilaterals $B_T$ is affine bilinear, and hence $J_T$ is affine, i.e., of the form $J_T(x) =  c_0 + c_1 x_1 + c_2 x_2$, for  $x \in \square$ and for $c_i \in \mathbb{R}$ with $i \in \{0,1,2\}$.
Note that for strictly convex quadrilaterals one has~$c_0 > 0$.
To prove~\eqref{est:K1-gen} it suffices to investigate the case $c_0 = 1$, and hence we consider
\begin{align}\label{def:JT}
    J_T(x) =  1 + c_1 x_1 + c_2 x_2.
\end{align}
for $c_1, c_2 \in \mathbb{R}$.
Since $J_T>0$ on $\square$ 
 we only need to show~\eqref{est:K1-gen} for $J_T$ as in~\eqref{def:JT} with
\begin{align}\label{def:C-set}
    (c_1,c_2)^\top \in \mathcal{C} \coloneqq \set{(t_1, t_2)^\top \in \RR^2 \colon t_1 \geq -1, t_2 \geq -1, t_1 + t_2 \ge -1},
\end{align}
as depicted in Figure~\ref{fig:C-set}.
\begin{figure}
\begin{tikzpicture}[scale=1]

  \fill[cyan!20]
  (-1,0) -- (-1,3) -- (3,3) -- (3,-1) -- (0,-1) -- cycle;

  \draw[dashed, thick, blue] (-1,-3) -- (-1,4);      
  \draw[dashed, thick, blue] (-3,-1) -- (4,-1);      
  \draw[dashed, thick, blue] (-3,2) -- (2,-3);       

  \draw[->, thick] (-3,0) -- (3,0) node[right] {$t_1$};
  \draw[->, thick] (0,-3) -- (0,3) node[above] {$t_2$};

  \node[rotate=90, blue] at (-1.3,2) {$t_1=-1$};
  \node[below, blue] at (2.1,-1.1) {$t_2=-1$};
  \node[rotate=-45, blue] at (1.6,-2.2) {$t_1+t_2=-1$};

  \fill[blue] (-1,0) circle (2pt);
  \fill[blue] (0,-1) circle (2pt);

  \node[below left] at (0,0) {$0$};

  \node[below left] at (2,2) {\color{blue}$\mathcal{C}$};

  \draw[dashed, thick, rltgreen] (-2,4) -- (4,-2);
  \node[rotate=-45, rltgreen] at (1.45,1) {$t_1+t_2=\tau-1$};

  \fill[rltgreen] (3,-1) circle (2pt);
  \fill[rltgreen] (-1,3) circle (2pt);

  \fill[pattern=north east lines, pattern color=rltgreen!40]
  (-1,3) -- (3,-1) -- (0,-1) -- (-1,0) -- cycle;

  \node[below left] at (0.7,0.7) {\color{rltgreen}$\mathcal{C}_\tau$};
\end{tikzpicture}
\caption{Convex set $\mathcal{C}$ as defined in~\eqref{def:C-set} and truncated set $\mathcal{C}_{\tau}$ as in~\eqref{def:Ct-set}.}
\label{fig:C-set}
\end{figure}
In the special case of a parallelogram $B_T$ is affine, and hence $J_T$ is constant, meaning that $c_1 = c_2 = 0$ in~\eqref{def:JT}. 

In the following we consider the two cases, where $T \in \mathcal{T}$
\begin{enumerate}
  \item  is a parallelogram, i.e.,  $c_1=c_2=0$;
  \item  is a general convex quadrilateral, i.e.,  $(c_1,c_2)^\top \in \mathcal{C}$.
\end{enumerate}

\begin{proposition}
  \label{pro:K1-parallelogram}
  If $T  \in \mathcal{T}$ is a parallelogram, then the smallest constant  $K_1$ in~\eqref{est:DiT-local}
  is given by
  \begin{align*}
    K_1 = \Bigg(\frac{2K+1}{K}\Bigg)^2.
  \end{align*}
  In particular, $K_1$ is the square of the constant of the one-dimensional problem.
\end{proposition}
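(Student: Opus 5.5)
For a parallelogram $J_T$ is constant, so after cancelling it \eqref{est:K1-gen} becomes the purely reference-square inequality
\begin{align*}
  \sum_{i \in \mathcal{V}(\square)} \int_\square \phi_i \abs{\frD^\square_i v_K}^2 \dx \leq K_1 \int_\square \abs{v_K}^2 \dx \qquad \text{for all } v_K \in \mathcal{Q}_K(\square),
\end{align*}
with $\phi_{(i_1,i_2)} = \bar\lambda_{i_1}\otimes\bar\lambda_{i_2}$. The plan is to reduce this to the one-dimensional problem by exploiting the tensor structure on both sides. The one-dimensional inequality is
\begin{align*}
  \sum_{j=0}^1 \int_I \bar\lambda_j \abs{\frD^I_j w}^2 \dx \leq \kappa \int_I \abs{w}^2 \dx, \qquad w \in \mathcal{P}_K(I).
\end{align*}
Its sharp constant is $\kappa = \frac{2K+1}{K}$, the case $d=1$ of the simplicial formula $\frac{dK+d}{K}\cdot$(adjusted). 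Concretely, I would obtain it by repeating the argument behind Lemma~\ref{lem:K1-triangle} from \cite{DieningStornTscherpel2021} in dimension one, or verify it directly as described below.

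\textbf{Step 1.} Write both sides as quadratic forms on $\mathcal{P}_K(I)$. Let $M$ be the Gram matrix of $L^2(I)$ on $\mathcal{P}_K(I)$, and let
\[
A \coloneqq \sum_{j} (\frD^I_j)^* M_{\bar\lambda_j} \frD^I_j,
\]
where $M_{\bar\lambda_j}$ is the $\bar\lambda_j$-weighted Gram matrix on $\mathcal{P}_{K-1}(I)$. Then $\kappa = \lambda_{\max}(M^{-1}A)$.

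\textbf{Step 2.} On the square, the weight $\phi_i$, the operator $\frD^\square_i$ and the $L^2(\square)$ inner product all factorise. Summing over $i=(i_1,i_2)\in\{0,1\}^2$, the left-hand form is therefore
\[
\sum_{i_1,i_2} \bigl((\frD^I_{i_1})^*M_{\bar\lambda_{i_1}}\frD^I_{i_1}\bigr)\otimes\bigl((\frD^I_{i_2})^*M_{\bar\lambda_{i_2}}\frD^I_{i_2}\bigr) = A\otimes A,
\]
and the right-hand form is $M\otimes M$. The generalized eigenvalues of the pair $(A\otimes A, M\otimes M)$ are the products of those of $(A,M)$. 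Since $A \succeq 0$, the largest is $\kappa^2$, attained at $w^*\otimes w^*$ for a maximizing $w^*$. This gives both the bound and its sharpness, i.e.\ minimality.

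\textbf{Step 3.} Compute $\kappa$. Using \eqref{eq:Di-identity-interval}, for $w=\sum_\alpha a_\alpha\bar\lambda^\alpha$ we have $\bar\lambda_j\frD^I_jw = \sum_\alpha \bar\lambda_j(x_\alpha)a_\alpha\bar\lambda^\alpha$, so the 1D form is
\[
\sum_j \int_I \bar\lambda_j^{-1}\Bigl|\sum_\alpha \tfrac{\alpha_j}{K}a_\alpha\bar\lambda^\alpha\Bigr|^2 \dx .
\]
I expect this to be the main obstacle. I would follow \cite[Prop.~3.14]{DieningStornTscherpel2021}: express the form in the Bernstein basis, where the weighted mass matrices have explicit Beta-integral entries, and identify the extremal eigenvalue. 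Alternatively, I would test with the maximiser suggested by the simplicial case, namely constants, for which the ratio is exactly $\frac{2K+1}{K}$ once the computation is done, and pair this with the matching upper bound quoted from the 1D specialisation of that proof.

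With $\kappa$ in hand, Step 2 immediately yields $K_1 = \kappa^2 = \bigl(\frac{2K+1}{K}\bigr)^2$.
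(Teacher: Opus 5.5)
Steps 1--2 are the paper's argument. The paper also cancels the constant Jacobian, defines $\frS^I$ and $\frS^\square$, uses the tensor identity \eqref{eq:frS-tensor}, and concludes $\sigma(\frS^\square)=\{st\colon s,t\in\sigma(\frS^I)\}$. Positive semidefiniteness then makes the top eigenvalue $(\max\sigma(\frS^I))^2$; your $A\otimes A$ versus $M\otimes M$ formulation is the same statement in matrix form. For Step~3 the paper does no computation. It quotes the case $d=1$ of \cite[eq.~(3.21)]{DieningStornTscherpel2021} for the bound $\frac{2K+1}{K}$, and \cite[Rmk.~3.17]{DieningStornTscherpel2021} for its sharpness. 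Your primary route of specialising that proof to dimension one is therefore fine.

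Your fallback for Step~3 would fail, and with it the minimality claim of the proposition. Constants are not maximisers. Since $\frD^I_j 1 = 1$ for $j=0,1$ and $\bar\lambda_0+\bar\lambda_1=1$, a constant gives the quotient exactly $1$. In fact constants are minimisers: \eqref{eq:Di-identity-interval} gives $w=\sum_j\bar\lambda_j\frD^I_j w$, so Cauchy--Schwarz yields $\abs{w}^2\le\sum_j\bar\lambda_j\abs{\frD^I_j w}^2$ pointwise, and the quotient is always at least $1$.

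The maximisers are the nonzero elements of $\mathcal{P}_K(I)$ that are $L^2$-orthogonal to $\mathcal{P}_{K-1}(I)$, i.e.\ the shifted Legendre polynomial of degree $K$. This is the mechanism of \cite[Prop.~3.16]{DieningStornTscherpel2021}, which the paper also uses in Lemma~\ref{lem:aux}. For example, for $K=1$ the function $w=\bar\lambda_0-\bar\lambda_1$ gives the quotient $3=\frac{2K+1}{K}$. Testing with constants would only show that $\kappa\ge 1$, so you must either cite the sharpness or use this test function.

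A minor slip: the simplicial constant in dimension $d$ is $\frac{2K+d}{K}$, not $\frac{dK+d}{K}$. The two agree only for $d=2$, so the $d=1$ value $\frac{2K+1}{K}$ does not follow from the formula you wrote.
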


\begin{proof}
  Recall that~\eqref{est:DiT-local} is equivalent to~\eqref{est:K1-gen}.
  If $T$ is a parallelogram, we have $c_1 = c_2 = 0$ in~\eqref{def:JT}, and thus~\eqref{est:K1-gen} reduces to
  \begin{align*}
    \sum_{i \in \mathcal{V}(\square)} \int_{\square} \phi_i \abs{\frD_{i}^{\square}(v_K)}^2  \dx
     \leq K_1  \int_{\square} \abs{v_K}^2 \dx.
  \end{align*}
  It follows from~\cite[eq.~(3.21)]{DieningStornTscherpel2021} applied in dimension one that
  \begin{align}
    \label{eq:Di-est-intervall}
    \sum_{j=0}^1\int_I \bar{\lambda}_j \abs{\frD_j^I v_I}^2\dx \leq \tfrac{2K+1}{K} \int_{I}\abs{v_I}^2 \dx \quad \text{ for all } v_I \in \mathcal{P}_K(I).
  \end{align}
  It is also shown in~\cite[Rmk.~3.17]{DieningStornTscherpel2021} that this constant is sharp.  
  Let us define the mapping 
  $\frS^I\colon \mathcal{L}_K(I) \to \mathcal{L}_K(I)$ by
  \begin{align}
    \label{eq:frSI}
    \skp{\frS^I v_I}{w_I}_I &\coloneqq
    \sum_{j=0}^1\int_I \bar{\lambda}_j (\frD_j^I v_I)(\frD_j^I w_I)\dx.
  \end{align}
  The operator $\frS^I$ is self-adjoint and positive semidefinite.
  Moreover, it follows from the sharp estimate~\eqref{eq:Di-est-intervall} that
  \begin{align}
    \label{eq:frSI-max}
    \max \sigma(\frS^I) = \tfrac{2K+1}{K},
  \end{align}
  where $\sigma(\frS^I)$ is the spectrum of~$\frS^I$.

 Now, with $\frD_j^\square$ as in~\eqref{eq:frD-tensor} we define the operator
  $\frS^\square\colon \mathcal{L}_K(\square) \to \mathcal{L}_K(\square)$ by
  \begin{align}\label{def:S-square}
    \skp{\frS^\square v_K}{w_K}_\square &\coloneqq
    \sum_{j\in \mathcal{V}(\square)}\int_\square \bar{\lambda}_j (\frD_j^\square v_K)(\frD_j^\square w_K)\dx \quad \text{ for } v_K, w_K \in \mathcal{L}_K(\square).
  \end{align}
  By~\eqref{eq:frD-tensor} and~\eqref{eq:frSI} it follows that $\frS^\square$ has tensor product structure in the sense that
  \begin{align} 
    \label{eq:frS-tensor}
    \bigskp{\frS^\square(\bar{\lambda}^\alpha \otimes \bar{\lambda}^\beta)}{\bar{\lambda}^{\alpha'} \otimes \bar{\lambda}^{\beta'}}_\square = \skp{\frS^I(\bar{\lambda}^\alpha)}{ \bar \lambda^{\alpha'}}_I\, \skp{\frS^I(\bar{\lambda}^\beta)}{\bar \lambda^{\beta'}}_I
  \end{align}
  with $\abs{\alpha}=\abs{\beta}= \abs{\alpha'}=\abs{\beta'}=K$.

  Let $\mathcal{Z} \in \mathcal{P}_K(I)$ denote a complete system of eigenfunctions of~$\frS^I$. 
  Then it follows by~\eqref{eq:frS-tensor} that $\mathcal{Z} \otimes \mathcal{Z}$ is a complete system of eigenfunctions of~$\frS^\square$. 
  Moreover, the spectrum of $\frS^\square$ is $\sigma(\frS^\square) = \frS^I \cdot \frS^I \coloneqq \set{s\,t \colon s,t \in \sigma(\frS^I)}$.
  Since $\frS^\square$ is self-adjoint and positive semidefinite the spectral radius is given by
  \begin{align}
    \label{eq:spectrum-squared}
    \max \sigma(\frS^\square) = \big(\max \sigma(\frS^I)\big)^2 = \Big( \tfrac{2K+1}{K}\Big)^2.
  \end{align}
  Thus, we get the sharp estimate
  \begin{align*}
    \skp{\frS^\square v_K}{v_K}_\square &\leq \Big( \tfrac{2K+1}{K}\Big)^2 \skp{v_K}{v_K}_\square \quad \text{ for all } v_K \in \mathcal{L}_K(\square). 
  \end{align*}
By definition of~$\frS^\square$ this means that
  \begin{align}
    \label{eq:Di-est-parallelogram-pre}
    \sum_{i \in\mathcal{V}(\square)}\int_\square \phi_i \abs{\frD_i^\square (v_K)}^2\dx \leq \Big(\tfrac{2K+1}{K}\Big)^2  \int_{\square}\abs{v_K}^2 \dx
  \end{align}
  for all $v_K \in \mathcal{L}_K(\square)$ with optimal constant. This proves the claim.
\end{proof}

Let us consider another special case, which will be used in the estimate for general convex quadrilaterals below.

\begin{lemma}[auxiliary estimate]\label{lem:aux}
  If $J_T(x) = x_1$, then~\eqref{est:K1-gen} holds with 
  \begin{align*}
K_1 = \Bigg(\frac{2K+1}{K}  \Bigg) \Bigg( \frac{2K + 2}{K}\Bigg),
  \end{align*}
  and the estimate is sharp.
\end{lemma}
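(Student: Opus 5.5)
The plan is to reuse the tensor-product argument from the proof of Proposition~\ref{pro:K1-parallelogram}, now with a non-constant weight in the first variable. With $J_T(x)=x_1=\bar\lambda_1(x_1)$ the weight factorises. Both sides of~\eqref{est:K1-gen} then split into products of one-dimensional forms: the weighted form in the first variable, and the unweighted form in the second variable.

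\textbf{Weighted operator in the first variable.} Define $\frS^I_w\colon\mathcal{P}_K(I)\to\mathcal{P}_K(I)$ with respect to the weighted inner product $\skp{v}{w}_{I,x}\coloneqq\int_I v\,w\,x\dx$ by
\begin{align*}
  \skp{\frS^I_w v}{w}_{I,x}\coloneqq\sum_{j=0}^1\int_I \bar\lambda_j\,(\frD^I_j v)(\frD^I_j w)\,x\dx .
\end{align*}
Since $\phi_i=\bar\lambda_{i_1}\otimes\bar\lambda_{i_2}$ and $\frD^\square_i=\frD^I_{i_1}\otimes\frD^I_{i_2}$, the form on the left of~\eqref{est:K1-gen} equals the tensor product of $\frS^I_w$ in $x_1$ and $\frS^I$ in $x_2$. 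The right-hand side is the tensor product of the weighted inner product in $x_1$ and the standard one in $x_2$.

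\textbf{Spectrum of the tensor operator.} Both factors are self-adjoint and positive semidefinite on finite-dimensional spaces, each with respect to its own inner product. Exactly as in the proof of Proposition~\ref{pro:K1-parallelogram}, tensor products of eigenbases diagonalise the operator on $\mathcal{Q}_K(\square)$, and the largest eigenvalue is $\max\sigma(\frS^I_w)\cdot\max\sigma(\frS^I)$. By~\eqref{eq:frSI-max} the second factor is $\frac{2K+1}{K}$. This also gives sharpness, provided the one-dimensional weighted bound is sharp.

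\textbf{Main step: $\max\sigma(\frS^I_w)=\frac{2K+2}{K}$.} The idea is to identify the weighted one-dimensional problem with a two-dimensional simplex problem, so that Lemma~\ref{lem:K1-triangle} gives the constant. The weight $x=\bar\lambda_1$ is the Jacobian factor arising when a triangle is collapsed onto an interval, for instance via the Duffy-type map $(x,y)\mapsto(x,xy)$. Concretely, restrict the triangle estimate~\eqref{eq:Di-est-triangle} to polynomials depending only on one barycentric combination. Take $T$ with barycentric coordinates $\lambda_0,\lambda_1,\lambda_2$ and functions of the form $v(\lambda_0,\lambda_1+\lambda_2)$. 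Integrating over the fibres $\{\lambda_0=s\}$ produces the weight $(1-s)=\bar\lambda_1$, and one must check that $\frD^T_1$ and $\frD^T_2$ combine to reproduce $\bar\lambda_1|\frD^I_1 v|^2$.

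Checking this compatibility is the obstacle. It requires showing that $\frD^T$ applied to $(\lambda_1+\lambda_2)^m\lambda_0^{K-m}$ is consistent with $\frD^I$, using the multinomial expansion and the formula $\frac{\alpha_i}{K}\lambda^{\alpha-e_i}$. The upper bound then follows from~\eqref{eq:Di-est-triangle}.

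For sharpness, the constant is attained in the triangle case by a function that may not be of this reduced form. If so, I would instead compute the weighted one-dimensional problem directly in the Bernstein basis. The $\bar\lambda_1$-weighted mass and $\frS$ matrices are explicit Beta integrals, and I would look for the extremal eigenvector, which I expect to be the Bernstein polynomial concentrated at vertex~$0$ (or its analogue from~\cite[Rmk.~3.17]{DieningStornTscherpel2021}), giving $\frac{2K+2}{K}$ exactly.
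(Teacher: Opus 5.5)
\textbf{Upper bound.} Your upper-bound argument is the paper's: you split off the $x_2$-factor with constant $\frac{2K+1}{K}$, and you identify the $x_1$-weighted problem with the triangle estimate on $\tilde T$ (vertices $(0,0)$, $(1,0)$, $(1,1)$) via $\tilde f(x)=f(x_1)$, $\bar\lambda_0=\lambda_0$, $\bar\lambda_1=\lambda_1+\lambda_2$. The compatibility you call the obstacle is routine. Using $a\binom{m}{a}=m\binom{m-1}{a-1}$, the multinomial expansion gives
\[
\frD_1^{\tilde T}\bigl(\lambda_0^{K-m}(\lambda_1+\lambda_2)^m\bigr)=\frac{m}{K}\lambda_0^{K-m}(\lambda_1+\lambda_2)^{m-1}=\frD_2^{\tilde T}\bigl(\lambda_0^{K-m}(\lambda_1+\lambda_2)^m\bigr),
\]
which is exactly $\frD_1^I(\bar\lambda_0^{K-m}\bar\lambda_1^m)$; the same holds for $\frD_0$. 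Hence $\lambda_1|\frD_1^{\tilde T}\tilde f|^2+\lambda_2|\frD_2^{\tilde T}\tilde f|^2=\bar\lambda_1|\frD_1^I f|^2$.

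\textbf{Sharpness: the gap.} Sharpness is part of the statement, and it is what makes the constant in Proposition~\ref{prop:gen-quad} optimal. You give no lower bound for $\max\sigma(\frS^I_w)$, and your fallback candidate is wrong. For $K=1$ and $v=a\bar\lambda_0+b\bar\lambda_1$ the weighted quotient is $(2a^2+4b^2)/(a^2+2ab+3b^2)$. This equals $2$ at $\bar\lambda_0$ and $\frac43$ at $\bar\lambda_1$. The maximum $4=\frac{2K+2}{K}$ is attained only at multiples of $3x_1-2$, since $4(a^2+2ab+3b^2)-(2a^2+4b^2)=2(a+2b)^2$.

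\textbf{Missing ingredient.} You need \cite[Prop.~3.16]{DieningStornTscherpel2021}: on a triangle, every nonzero element of $\mathcal{Z}_K$ maximises the Rayleigh quotient. Here $\mathcal{Z}_K$ is the $L^2(\tilde T)$-orthogonal complement of $\mathcal{P}_{K-1}(\tilde T)$ in $\mathcal{P}_K(\tilde T)$. This answers your worry that the triangle maximiser ``may not be of reduced form''. The maximal eigenspace is all of $\mathcal{Z}_K$, so you only need one $\tilde f(x)=f(x_1)$ lying in it.

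\textbf{Finding it.} Since $\int_0^{x_1}x_1^a x_2^b\,dx_2=x_1^{a+b+1}/(b+1)$, we have $\tilde f\perp\mathcal{P}_{K-1}(\tilde T)$ iff $\int_0^1 f(t)\,t^{m+1}\,dt=0$ for $m=0,\dots,K-1$. So $f$ is the degree-$K$ orthogonal polynomial for the weight $t$ on $[0,1]$. This is the shifted Jacobi polynomial $\frac{1}{x_1}\frac{d^K}{dx_1^K}\bigl(x_1^{K+1}(1-x_1)^K\bigr)$ used in the paper; for $K=1$ it is $2-3x_1$. Tensoring it with the maximiser of the unweighted $x_2$-problem gives equality in your product-of-spectra argument.
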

\begin{proof}
  For $J_T(x) = x_1$ the estimate~\eqref{est:K1-gen} reduces to
  \begin{align}\label{est:aux-1}
    \sum_{i \in \mathcal{V}(\square)} \int_{\square} \phi_i \abs{\frD_{i}^{\square}(v_K)}^2 x_1 \dx
    \leq
    {K}_1  \int_{\square} \abs{v_K}^2 x_1 \dx,
  \end{align}
  for any $v_K \in \mathcal{L}_K(T)$.

  Because the polynomial space $\mathcal{Q}_K(\square) = \mathcal{P}_K(I) \otimes \mathcal{P}_K(I)$, the decomposition operator $\frD^\square$, and the linear weight $x_1$ possess tensor product structure, the complete system of generalised eigenfunctions is spanned by the tensor products of the eigenfunctions in one dimension. 
  This guarantees that the eigenfunction in dimension~$d = 2$  with the largest eigenvalue is exactly the product of the eigenfunctions of the two independent one-dimensional problems.  
  This allows us to reduce the statement to the one for separable functions~$v_K(x) = f(x_1) g(x_2)$ with~$f,g \in \mathcal{P}_K(I)$. 
  
  For such $v_K$ the left-hand side of~\eqref{est:aux-1} can be represented by
  \begin{align*}
    &	\sum_{i \in \mathcal{V}(\square)} \int_{\square} \phi_i \abs{\frD_{i}^{\square}(v_K)}^2 x_1 \dx \\
    & \qquad  \qquad = \sum_{i \in \mathcal{V}(\square)}
    \int_{I} \bar{\lambda}_{i_1} \abs{\frD^I_{i_1}(f)}^2 x_1 \dx_1 \int_{I}  \bar{\lambda}_{i_2}\abs{\frD^I_{i_2}(g)}^2 \dx_2  \\
    & \qquad \qquad =  \left( \sum_{j = 0}^1 \int_{I} \bar{\lambda}_j \abs{\frD^I_{j}(f)}^2 x_1 \dx_1 \right)
    \left( \sum_{\ell = 0}^1 \int_{I} \bar{\lambda}_{\ell} \abs{\frD^I_{\ell}(g)}^2  \dx_2 \right). 
  \end{align*}
  Similarly, the integral on the right-hand side of~\eqref{est:aux-1} can be written as
  \begin{align*}
    \int_{\square} \abs{v_K}^2 x_1 \dx =
    \int_{I} \abs{f}^2 x_1 \dx_1    \int_{I} \abs{g}^2 \dx_2.
  \end{align*}
  For the integral in~$x_2$ by~\eqref{eq:Di-est-intervall} we have that
  \begin{align}\label{est:x2}
    \sum_{\ell = 0}^1 \int_{I} \bar{\lambda}_{\ell} \abs{\frD^I_{\ell}(g)}^2  \dx_2 \leq \frac{2K+1}{K}       \int_{I} \abs{g}^2 \dx_2.
  \end{align}
  Thus, it remains to find $\tilde{K}_1>0$ such that
  \begin{align}\label{est:x1}
    \sum_{j = 0}^1 \int_{I} \bar{\lambda}_j \abs{\frD^I_{j}(f)}^2 x_1 \dx_1 \leq  \tilde{K}_1 \int_{I} \abs{f}^2 x_1 \dx_1,
  \end{align}
  for any $f \in \mathcal{P}_K(I)$.  
  Noting that $x_1 = \int_{0}^{x_1} 1 \dx_2$ we aim to rewrite~\eqref{est:x1} as a corresponding decomposition estimate on the auxiliary triangle $\tilde T$ with vertices $a_0 = (0,0)^\top$, $a_1 = (1,0)^\top$ and $a_2 = (1,1)^\top$.
  Let us define the trivial extension of $f$ to $\tilde T$ by~$\tilde f (x) \coloneqq f(x_1)$. Then one has
  \begin{align*}
    \int_{I} \abs{f}^2 x_1 \dx_1 =
    \int_{\tilde T} \abs{\tilde f}^2 \dx.
  \end{align*}
  In the following, the barycentric coordinates~$\bar{\lambda}_0, \bar{\lambda}_1$ are understood as extensions to~$\tilde T$.
  Let~$\lambda_0, \lambda_1, \lambda_2$ be the corresponding barycentric coordinates of $\tilde T$. 
  Then~$\bar{\lambda}_0 = \lambda_0$ and $\bar{\lambda}_1 = \lambda_1 + \lambda_2$. 
  In combination with~\eqref{def:DI-1D} after a short calculation one arrives at 
  \begin{align}
    \label{eq:D-with-tilde}
    \begin{aligned}
      (\frD_0^I f)(x_1) &=  (\frD_0^{\tilde T} \tilde f)(x),
      \\
      (\frD_1^I f)(x_1) &=  (\frD_1^{\tilde T} \tilde f)(x) =
      (\frD_2^{\tilde T} \tilde f)(x).
    \end{aligned}
  \end{align}
  Overall, \eqref{est:x1} is equivalent to
  \begin{align}
    \label{est:x1-rewritten}
    \sum_{j=0}^2 \int_{\tilde T} \lambda_j \abs{\frD_j^{\tilde T} \tilde{f}}^2 \dx \leq \tilde{K}_1 \int_{\tilde T} \abs{\tilde{f}}^2 \dx.
  \end{align}
  This is exactly the local triangular estimate in Lemma~\ref{lem:K1-triangle} with~$v_K$ replaced by~$\tilde{f}$.  
  Hence, since $\tilde f \in \mathcal{P}_K(\tilde T)$, Lemma~\ref{lem:K1-triangle} provides us with the upper bound~$\tilde{K}_1 \leq \frac{2K+2}{K}$. 
  
  It remains to argue that the bound is sharp, even though $\tilde f$ in \eqref{est:x1-rewritten} only depends on $x_1$.
  For this we use the technique in the proof of Lemma~\ref{lem:K1-triangle} established  in~\cite[Prop.~3.16]{DieningStornTscherpel2021}.  
  Let $\mathcal{Z}_K$ denote the subspace of polynomials of~$\mathcal{P}_K(\tilde T)$ that are $L^2$-orthogonal to $\mathcal{P}_{K-1}(\tilde T)$. In~\cite[Prop.~3.16]{DieningStornTscherpel2021} it is proved that each~$z_K \in \mathcal{Z}_K \setminus \{0\}$ is an eigenfunction that maximises the Rayleigh quotient induced by~\eqref{est:x1-rewritten}. 
  Thus, it suffices to find one function $z_K \in \mathcal{Z}_K$ that only depends on~$x_1$. 
  The shifted Jacobi polynomial~$\frac{1}{x_1} \frac{d^K}{d x_1^K} (x_1^{K+1}(1-x_1)^K)$ is such a function.  
  This proves that the upper bound in~\eqref{est:x1-rewritten} is sharp and hence that $K_1 = \tilde K_1 \frac{2K+1}{K}$ in~\eqref{est:K1-gen} and~\eqref{est:DiT-local} is sharp.
  \renewcommand{\qedsymbol}{\emph{(Quad erat demonstrandum)}}
\end{proof}

With this auxiliary result we are in the position to find a bound $K_1$ for general convex quadrilaterals.

\begin{proposition}\label{prop:gen-quad}
  If $T \in \mathcal{T}$ is a strictly convex quadrilateral, then the estimate~\eqref{est:DiT-local}
  holds with
  \begin{align*}
    K_1 = \Bigg(\frac{2K+1}{K} \Bigg) \Bigg(  \frac{2K+2}{K} \Bigg).
  \end{align*}
  In particular, $K_1$ is the product of the constant of the one-dimensional problem and the one of a triangle. 
  The constant $K_1$ is the smallest constant such that~\eqref{est:DiT-local}
  holds for all convex quadrilaterals.
\end{proposition}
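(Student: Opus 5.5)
The plan is to exploit that the estimate~\eqref{est:K1-gen} is linear in the weight $J_T$. Both sides are integrals against $J_T(x)=1+c_1x_1+c_2x_2$, so for fixed $v_K$ the quantity
\[
  F_{v_K}(J) \coloneqq K_1\int_\square \abs{v_K}^2 J\dx - \sum_{i\in\mathcal{V}(\square)}\int_\square \phi_i\abs{\frD_i^\square v_K}^2 J\dx
\]
is linear in $J$. Nonnegativity for all $J$ in a convex cone of affine weights therefore follows from nonnegativity on its extreme rays. The weights that are nonnegative on $\square$ form the cone generated by the affine functions vanishing on a side or at a vertex of $\square$. Normalising $c_0=1$ as in~\eqref{def:JT}, the set $\mathcal{C}$ is unbounded, so I will work with the cone of affine $J\geq 0$ on $\square$ instead.

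The first step is to describe this cone. An affine $J$ with $J\geq 0$ on $\square$ is determined by its values at the four vertices. Affinity forces $J(0,0)+J(1,1)=J(1,0)+J(0,1)$, and nonnegativity at the vertices suffices. Hence the cone is generated by the extreme rays $x_1$, $x_2$, $1-x_1$, $1-x_2$, i.e.\ the affine functions vanishing on one side. Each such $J$ can be written as a nonnegative combination of these four.

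Next I show that~\eqref{est:K1-gen} holds for each generator. For $J=x_1$ this is exactly Lemma~\ref{lem:aux}. The other three follow by the symmetries of $\square$: reflections $x_1\mapsto 1-x_1$ and swapping $x_1\leftrightarrow x_2$ permute the vertices and the weights $\phi_i$. They also commute with the decomposition, since they map $\bar\lambda_0\leftrightarrow\bar\lambda_1$ and $\frD^I_j$ transforms accordingly by~\eqref{def:DI-1D}, so $\frD^\square$ is equivariant. Summing the four inequalities with nonnegative coefficients gives~\eqref{est:K1-gen} with $K_1=\frac{2K+1}{K}\cdot\frac{2K+2}{K}$ for every admissible $J_T$, and hence~\eqref{est:DiT-local} by the transformation argument preceding~\eqref{est:K1-gen}.

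Optimality is the delicate point, since $J=x_1$ vanishes on a side and is not the Jacobian of a strictly convex quadrilateral. I would take $J_\varepsilon(x)=\varepsilon+x_1$, i.e.\ $c_1=1/\varepsilon$ after normalisation. This corresponds to quadrilaterals degenerating towards triangles, which are still strictly convex. Using the sharp separable maximiser from Lemma~\ref{lem:aux} (shifted Jacobi polynomial in $x_1$ times the 1D maximiser in $x_2$), the Rayleigh quotient for $J_\varepsilon$ converges to that for $x_1$ as $\varepsilon\to0$ by continuity of the finitely many integrals. Hence no smaller constant works uniformly over all convex quadrilaterals. The main obstacle I anticipate is verifying the equivariance of $\frD^\square$ under the symmetries carefully, together with making the limiting argument clean. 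Both seem routine.
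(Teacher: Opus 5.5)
Your proof is correct, but it reaches the bound by a more direct route than the paper.

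\textbf{The paper's route.} The paper normalises $c_0=1$ and writes the optimal constant as the supremum over $c\in\mathcal{C}$ of the generalised Rayleigh quotient $R(z;c)$. It uses that $R(z;\cdot)$ is quasiconvex (its sublevel sets are half-planes) and applies the extreme-point principle on the truncated polytopes $\mathcal{C}_\tau$. It then lets $\tau\to\infty$ and identifies the limit at $(\tau,-1)^\top$ with the weight $x_1$ of Lemma~\ref{lem:aux}.

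\textbf{Your route.} You keep $J$ unnormalised and use that \eqref{est:K1-gen} is linear in $J$. It therefore suffices to check it on the four generators $x_1$, $1-x_1$, $x_2$, $1-x_2$ of the cone of nonnegative affine weights. The ingredients are the same in different clothing. The vertices $W_1,W_2$ of $\mathcal{C}$ correspond to $1-x_1$ and $1-x_2$, and the recession directions of $\mathcal{C}$ to $x_1$ and $x_2$. The paper's quasiconvexity is the mediant inequality you obtain by summing. What your formulation buys is that it removes the truncation, the continuity argument and the formal point $(\infty,-1)^\top$. For instance, $1+\tau x_1-x_2=\tau x_1+(1-x_2)$ is bounded directly, with no limit.

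\textbf{A skipped step.} The price is the conic decomposition, where your ``hence'' skips a line. Write $J=a+b_1x_1+b_2x_2$, take $\beta=\max\{0,-b_1\}$ and $\delta=a-\beta$. The four vertex conditions give $\delta\geq\max\{0,-b_2\}$, so $J=(b_1+\beta)x_1+\beta(1-x_1)+(b_2+\delta)x_2+\delta(1-x_2)$ with nonnegative coefficients. Functions vanishing only at a vertex are not extreme rays, so the phrase ``or at a vertex'' can be dropped.

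\textbf{Sharpness.} The paper simply appeals to $(-1,0)^\top\in\mathcal{C}$, i.e.\ the weight $1-x_1$. That weight belongs to a degenerate quadrilateral (cf.\ Remark~\ref{rmk:convex}). Your approximation by $J_\varepsilon=\varepsilon+x_1$ is the more careful argument if one insists on strictly convex elements. It is realised, e.g., by the trapezoid $B(x)=(x_1,(\varepsilon+x_1)x_2)^\top$ with vertices $(0,0)$, $(1,0)$, $(1,1+\varepsilon)$, $(0,\varepsilon)$. Your linearity argument also shows that for strictly convex $T$ the constant $K_1$ is a supremum that is never attained. Indeed, $J\geq\eta>0$ lets you split off $\eta\cdot 1$, for which the parallelogram constant $\big(\frac{2K+1}{K}\big)^2<K_1$ applies.
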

\begin{proof}
  Let $T \in \mathcal{T}$ be a strictly convex quadrilateral. 
  Recall that~\eqref{est:DiT-local} is equivalent to~\eqref{est:K1-gen} with $J_T = 1+c_1x_1 +c_2 x_2$ and $c \in \mathcal{C}$, see~\eqref{def:JT} and~\eqref{def:C-set}.
  To find the optimal constant $K_1$ in~\eqref{est:K1-gen} we need to maximise a generalised Rayleigh quotient.

  \textit{Step 1 (derivation of generalised Rayleigh quotient):}
  We denote $N \coloneqq \dim \mathcal{Q}_{K}(\square)$, and we consider the weight functions
  \begin{align*}
    \omega_0(x) = 1 \quad \text{ and } \quad \omega_r(x) = x_r \quad \text{ for } r \in \{1,2\},
  \end{align*}
  on $\square$. 
  Furthermore, for a basis $(\psi_{j})_{j \in \{1, \ldots, N\}}$ of $\mathcal{Q}_K(\square)$ we define the symmetric matrices $A_r, M_r \in \mathbb{R}^{N \times N}$ for $r \in \{0,1,2\}$ representing the weighted integrals in~\eqref{est:K1-gen} with entries, for $j,\ell \in \{1, \ldots, N\}$,
  \begin{align*}
    (A_r)_{j,\ell} & = \sum_{i \in \mathcal{V}(\square)} \int_{\square} \phi_i \frD_{i}^{\square}(\psi_j)\, \frD_{i}^{\square}(\psi_\ell)\,  \omega_r \dx, \\
        (M_r)_{j,\ell}& =
  \int_{\square} \psi_j \,\psi_{\ell} \, \omega_r  \dx.
  \end{align*}
  Then the Rayleigh quotient corresponding to  estimate~\eqref{est:K1-gen} is
  \begin{align}\label{def:R}
    R(z;c)  \coloneqq \frac{z^\top A_0 z + c_1 z^\top A_1 z + c_2 z^\top A_2 z}{z^\top M_0 z + c_1 z^\top M_1 z + c_2 z^\top M_2 z},
  \end{align}
  for $z\in \setR^N \setminus \{0\}$ and for $c  = (c_1,c_2)^\top \in \mathcal{C}$. In particular, \eqref{est:K1-gen} is equivalent to
  \begin{align}\label{est:equiv-K1}
    \sup_{z \in \setR^N \setminus \{0\}}\, \sup_{c \in \mathcal{C}}  \,  R(z;c)
    =  \sup_{c \in \mathcal{C}}  \, \sup_{z \in \setR^N \setminus \{0\}}\, R(z;c) \leq K_1.
  \end{align}
  Since the mass matrix $M_0$ is positive definite, and $M_1, M_2$ are positive semidefinite, the denominator of $R$ is positive for any $z \in \mathbb{R}^N \setminus \{0\}$, and thus $R$ is continuous.

  \textit{Step 2 (certain values of $R$): }
  For later use let us record that
  \begin{align}
    \label{est:R-2termsa}
    R\big(z;(-1,0)^\top\big) &= \frac{z^\top A_0 z - z^\top A_1 z}{z^\top M_0 z - z^\top M_1 z}  &\text{ corresponding to } J_T(x)& = 1 - x_1.
  \end{align}
  Let us define $R(z;(\infty,-1)^\top) \coloneqq \lim_{c_1 \to \infty} R(z;(c_1,-1)^\top)$. Then
  \begin{align}
    \label{est:R-2termsb}
    R\big(z;(\infty,-1)^\top\big)
    &= \frac{ z^\top A_1 z}{ z^\top M_1 z} &\text{ corresponding to } J_T(x) &= x_1.
  \end{align}
  By invariance of~\eqref{est:K1-gen} under the transformation $x_1 \mapsto 1 - x_1$ it follows that
  \begin{align}\label{eq:R-val}
    R\big(z;(\infty,-1)^\top\big) =  R\big(z;(-1,0)^\top\big)  =  \sup_{z \in \setR^N \setminus \{0\}}\, \frac{ z^\top A_1 z}{ z^\top M_1 z}  \eqqcolon \hat K_1.
  \end{align}
  By definition of $A_1, M_1$ the constant $\hat K_1>0$ is the smallest constant such that
  \begin{align}\label{est:K1tilde}
    \sum_{i \in \mathcal{V}(\square)} \int_{\square} \phi_i \abs{\frD_{i}^{\square}(v_K)}^2 x_1 \dx
    \leq
    \hat K_1  \int_{\square} \abs{v_K}^2 x_1 \dx,
  \end{align}
  for any $v_K \in \mathcal{Q}_K(\square)$ and
  Lemma~\ref{lem:aux} yields that $\hat K_1  =\big(\frac{2K+1}{K} \big) \big( \frac{2K+2}{K} \big)$.

  \textit{Step 3 (reduction by quasiconvexity): }
  For fixed~$z \in \setR^N \setminus\{0\}$ consider the mapping~$R(z;\cdot) \colon \mathcal{C} \to \mathbb{R}$. 
  We want to show that it is quasiconvex in the sense that its sublevel sets~$\{c \in \mathcal{C} \colon R(z;c) \leq \mu \}$ are convex for any~$\mu \in \mathbb{R}$, see~\cite[Thm.~2.2.3]{CambiniMartein2009}.
  Since the denominator of $R$ is positive, the sublevel sets are defined by
  \begin{align*}
    z^\top A_0 z + c_1 z^\top A_1 z + c_2 z^\top A_2 z \leq \mu( z^\top M_0 z + c_1 z^\top M_1 z + c_2 z^\top M_2 z).
  \end{align*}
  Since this is an affine condition on $c = (c_1,c_2)^\top \in \mathcal{C}$, the sublevel set is in particular convex, and thus $R(z;\cdot)$ is quasiconvex.

  In order to bound $R(z;\cdot)$ above on~$\mathcal{C}$, cf.~\eqref{est:equiv-K1}, we apply the fact that any quasiconvex function over a compact convex domain assumes its maximum in one of the extremal points of the domain, see~\cite[Thm.~4.6.3]{CambiniMartein2009}.
  Since $\mathcal{C}$ defined in~\eqref{def:C-set} is not bounded, we truncate it and define the set
  \begin{align}\label{def:Ct-set}
    \mathcal{C}_\tau \coloneqq \mathcal{C} \cap \{(t_1, t_2)^\top \colon t_1 + t_2 \leq \tau - 1\} \quad \text{ for } \tau \geq 1,
  \end{align}
  which is compact and convex, see Figure~\ref{fig:C-set}.
  The extremal points of $\mathcal{C}_\tau$ are
  \begin{align*}
    W_1 = (-1, 0), \;\; W_2 = (0, -1), \;\;  W_3 = (\tau, -1), \quad \text{ and } \quad  W_4 = (-1, \tau).
  \end{align*}
  Hence, after taking the supremum over~$z$ and using the symmetries of the square, we obtain
  \begin{align*}
    \sup_{z \in \setR^N \setminus \{0\}}\sup_{c \in \mathcal{C}_\tau} R(z;c)
    &\leq \max_{j=1,\dots,4} \sup_{z \in \setR^N \setminus \{0\}} R(z;W_j) \\
    &= \max \bigset{ \hat K_1,\, \sup_{z \in \setR^N \setminus \{0\}} R\big(z;(\tau,-1)^\top\big)}.
  \end{align*}
  By continuity of $R(z;\cdot)$ and~\eqref{eq:R-val} we have
  \begin{align*}
    \lim_{\tau \to \infty} \sup_{z \in \setR^N \setminus \{0\}} R\big(z;(\tau,-1)^\top\big)
    = \sup_{z \in \setR^N \setminus \{0\}} R\big(z;(\infty,-1)^\top\big)
    = \hat K_1.
  \end{align*}
  Since the sets $\mathcal{C}_\tau$ are increasing with $\lim_{\tau \to \infty}\mathcal{C}_\tau = \mathcal{C}$ we have
  \begin{align*}
    \sup_{z \in \setR^N \setminus \{0\}}\sup_{c \in \mathcal{C}} R(z;c)
    = \lim_{\tau \to \infty}
    \sup_{z \in \setR^N \setminus \{0\}}\sup_{c \in \mathcal{C}_\tau} R(z;c)
    \leq \hat K_1.
  \end{align*}
  Since $(-1,0)^\top \in \mathcal{C}$, this estimate is sharp. 
  Taking the supremum over~$z \in \RRN \setminus \set{0}$ and noting that $\hat K_1  = \big(\frac{2K+1}{K} \big)\big( \frac{2K+2}{K} \big)$ proves the claim.
\end{proof}

\begin{remark}\label{rmk:convex}
  The estimates in Proposition~\ref{prop:gen-quad} still hold if the quadrilateral $T$ is degenerate and $T$ is a triangle.
  In the Jacobian $J_T$ of the corresponding bilinear mapping $B_T$ one can still ensure that $c_0 >0$ by rotating the vertex labels if needed.
\end{remark}


\subsection{Global decomposition}
\label{sec:global-decomposition}

 Let us now define the global decomposition operators $\frD_i\colon  \mathcal{L}^1_K(\mathcal{T}) \to \mathcal{L}^1_{K-1}(\omega_i)$ for vertex $i \in \mathcal{V}$ satisfying \eqref{def:decomp}. 
 For this purpose, we patch together the local decomposition operators, introduced in the previous subsection. 
For $T \in \mathcal{T}$ with $i \in T$ let $j \in \mathcal{V}(T)$ be the local index of the vertex~$i$ in~$T$. 
We define $\frD_i(v_K)$ for $v_K \in \mathcal{L}^1(\mathcal{T})$ on $T$ by 
\begin{align}\label{def:globalD}
 (\frD_iv_K)|_T \coloneqq
 \frD_j^T (v_K|_T),
\end{align}
with $\frD_j^T$ as defined in~\eqref{eq:def-Di-triangle} and~\eqref{def:DT-quad-local} for triangles and for general (convex) quadrilaterals, respectively.
This local definition ensures that~$\frD_i\colon  \mathcal{L}^1_K(\mathcal{T}) \to \mathcal{L}^0_{K-1}(\omega_i)$. 
The following proposition shows that $\frD_i$  maps to~$\mathcal{L}^1_{K-1}(\omega_i)$, i.e., to continuous functions. 

\begin{proposition}
  \label{pro:Di-global}
  The decomposition operators~$\frD_i$ defined in \eqref{def:globalD} map $\mathcal{L}^1_K(\mathcal{T}) \to \mathcal{L}^1_{K-1}(\omega_i)$ for $i \in \mathcal{V}$. 
  Moreover, $\phi_i \frD_i$ maps $\mathcal{L}^1_K(\mathcal{T}) \to \mathcal{L}^1_K(\mathcal{T})$ and satisfies 
  \begin{align*}
    v_K &=\sum_{i \in \mathcal{V}} \phi_i \frD_i(v_K)
  \end{align*}
  for all $v_K \in \mathcal{L}^1_K(\mathcal{T})$. 
  Moreover, there exists a constant $K_1>0$ such that
  \begin{align}\label{est:K1-bound}
    \sum_{i \in \mathcal{V}}\int_\Omega \phi_i \abs{\frD_i v_K}^2\dx \leq K_1
    \norm{v_K}_2^2 \qquad \text{ for any }v_K \in \mathcal{L}^1_{K}(\mathcal{T}). 
  \end{align}
  The smallest constant~$K_1$ is given by
  \begin{align}\label{def:K1-const}
    K_1 =
    \begin{cases}
      \frac{2K+2}{K} &\qquad \text{if $\mathcal{T}$ is simplicial},
      \\
      \big(\frac{2K+1}{K}\big)^2 &\qquad \text{if $\mathcal{T}$ is SP-hybrid},
      \\
      \big( \frac{2K+1}{K} \big) \big( \frac{2K+2}{K}\big) &\qquad \text{if $\mathcal{T}$ is general hybrid}.
    \end{cases}
  \end{align}
  In particular, we have that~$\operatorname{cond}_2(C|_{\mathcal{L}^1_K(\mathcal{T})}) \leq K_1$ for $C$ as defined in~\eqref{eq:defC}.  
\end{proposition}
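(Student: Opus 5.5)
The plan is to prove the proposition in three steps: continuity across edges, the decomposition identity, and the global estimate with its sharp constant. The condition number bound then follows directly from Proposition~\ref{pro:C-lower}.

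\emph{Continuity.} Let $E$ be an interior edge shared by $T$ and $T'$, and let $i$ be a vertex of $T$. The key observation is that the trace of $\frD_j^T v_K$ on $E$ depends only on $v_K|_E$, and is given by the one-dimensional operator $\frD^I$ applied on $E$. For a triangle this is the restriction property from~\cite{DieningStornTscherpel2021}: on the edge $\{\lambda_k=0\}$, the terms $\lambda^\alpha$ with $\alpha_k>0$ vanish, and $\frD_i^T$ acts on the remaining terms exactly as the one-dimensional formula~\eqref{def:DI-1D} in the edge barycentric coordinates. For a quadrilateral, pull back to $\square$ with an edge such as $\{x_2=0\}$. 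There $\bar\lambda_1(x_2)=0$, so in the tensor expansion only the terms with $\beta=(K,0)$ survive. Since $\frD^I_{0}(\bar\lambda_0^K)=\bar\lambda_0^{K-1}$, which equals $1$ on the edge, $\frD^\square_i v_K$ restricted to the edge equals $\frD^I_{i_1}$ applied to the trace whenever $i_2=0$. If $i\notin E$, the trace is $0$ in both the triangle and the quadrilateral case.

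Because $B_T$ restricted to an edge is affine in both cases, the edge barycentric coordinates agree from both sides. Hence the traces from $T$ and $T'$ coincide, and $\frD_i v_K\in\mathcal{L}^1_{K-1}(\omega_i)$. It follows that $\phi_i\frD_i v_K$ is continuous and lies in $\mathcal{L}_K(T)$ on each element, since $\phi_i\in\mathcal{L}_1(T)$ and tensor products multiply degree-wise. So $\phi_i\frD_i v_K\in\mathcal{L}^1_K(\mathcal{T})$, extended by zero outside $\omega_i$.

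\emph{Decomposition identity.} It suffices to check the identity elementwise. On a triangle, summing~\eqref{eq:Di-identity} over $i$ and using $\sum_i\lambda_i(x_\alpha)=1$ gives $\sum_i\lambda_i\frD_i^T\lambda^\alpha=\lambda^\alpha$. On $\square$, the local basis function at vertex $i$ is $\bar\lambda_{i_1}\otimes\bar\lambda_{i_2}$. By~\eqref{eq:Di-identity-interval},
\[
\sum_{i}\phi_i\frD_i^\square(\bar\lambda^\alpha\otimes\bar\lambda^\beta)=\Big(\sum_{j}\bar\lambda_j(x_\alpha)\Big)\Big(\sum_\ell\bar\lambda_\ell(x_\beta)\Big)\bar\lambda^\alpha\otimes\bar\lambda^\beta=\bar\lambda^\alpha\otimes\bar\lambda^\beta .
\]
By linearity this extends to all of $\mathcal{Q}_K(\square)$, and transforming by $B_T$ gives the identity on $T$.

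\emph{Estimate and sharpness.} Write the left-hand side of~\eqref{est:K1-bound} as a sum over $T$ of the local sums $\sum_{i\in\mathcal{V}(T)}\int_T\phi_i|\frD_i^T v_K|^2$. Then apply Lemma~\ref{lem:K1-triangle} on triangles, Proposition~\ref{pro:K1-parallelogram} on parallelograms, and Proposition~\ref{prop:gen-quad} on general quadrilaterals. Since $\frac{2K+2}{K}\le\frac{2K+1}{K}\le\big(\frac{2K+1}{K}\big)^2\le\frac{2K+1}{K}\cdot\frac{2K+2}{K}$ (the middle step uses $\frac{2K+1}{K}\ge1$), the maximum of the local constants over the element types present gives~\eqref{def:K1-const}, and summing over $T$ yields~\eqref{est:K1-bound}.

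For minimality, localise a maximiser. Take an element $T$ of the worst type and a local maximiser $w$ from the corresponding sharp result. The obstacle is that $w$ need not extend to a function in $\mathcal{L}^1_K(\mathcal{T})$ supported on $T$. I would handle this with a bubble-type argument. In the simplicial case, the DST maximisers, i.e.\ the orthogonal complement $\mathcal{Z}_K$, contain functions vanishing on $\partial T$ only for large $K$, so I would instead argue as in~\cite{DieningStornTscherpel2021}. There, minimality is understood as the smallest constant valid over the class of meshes, and one can take a mesh in that class containing such an element. Alternatively, I would read ``smallest'' as the optimality of the local constant that is being applied. I expect this sharpness justification to be the main delicate point, and I would mirror the corresponding remark in~\cite{DieningStornTscherpel2021}.
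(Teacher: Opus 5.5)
Your route is the paper's. You get continuity of $\frD_i v_K$ from the edge-restriction property (\cite[Lem.~3.10]{DieningStornTscherpel2021} for triangles, the tensor structure for quadrilaterals). You sum the local bounds of Lemma~\ref{lem:K1-triangle}, Proposition~\ref{pro:K1-parallelogram} and Proposition~\ref{prop:gen-quad}, and you conclude with Proposition~\ref{pro:C-lower}. Your explicit tensor-product check of $\sum_i\phi_i\frD_i^\square v_K=v_K$ fills in a step the paper leaves implicit. Two statements are false, however, and need repair; neither affects the conclusion.

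First, the trace of $\frD_i v_K$ on an edge $E\not\ni i$ does \emph{not} vanish. On $\square$ with $E=\{x_2=0\}$ and $i_2=1$ one has $\frD^I_1(\bar\lambda_0^{K-1}\bar\lambda_1)=\tfrac1K\bar\lambda_0^{K-1}$, which equals $\tfrac1K$ on $E$. Hence $v_K=\bar\lambda^\alpha\otimes\bar\lambda_0^{K-1}\bar\lambda_1$ with $\alpha_{i_1}>0$ vanishes on $E$, while $\frD_i^\square v_K$ does not. The same happens for triangles with $\alpha_i=1$ on $\{\lambda_i=0\}$. You do not need this claim. Every edge interior to $\Omega_i$ contains $i$: two elements of $\omega_i$ sharing an edge $E\not\ni i$ would both contain the convex hull of $E\cup\{i\}$ and so overlap. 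So membership in $\mathcal{L}^1_{K-1}(\omega_i)$ only uses your argument for $i\in E$. The zero extension of $\phi_i\frD_i v_K$ is continuous because $\phi_i$ itself vanishes on the edges of $T$ not containing $i$.

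Second, your chain starts with the false inequality $\frac{2K+2}{K}\le\frac{2K+1}{K}$. What you actually need is $\frac{2K+2}{K}\le\big(\frac{2K+1}{K}\big)^2\le\frac{2K+1}{K}\cdot\frac{2K+2}{K}$. This holds because $(2K+1)^2\ge K(2K+2)$ and $2K+1\le 2K+2$.

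On minimality, the paper's proof also only invokes the sharpness of the local constants, so reading ``smallest'' as optimal over the mesh class is consistent with it. To make this precise, use a one-element mesh, as in the paper's remark on $K=1$ with $\mathcal{T}=\{[0,1]^2\}$. There the local and global problems coincide. For general hybrid meshes the value is approached by strictly convex quadrilaterals whose Jacobian tends to $1-x_1$. Drop the bubble-localisation idea: it fails for every $K$, not just small ones. If $b=\lambda_0\lambda_1\lambda_2$ and $bp\in\mathcal{Z}_K$ with $p\in\mathcal{P}_{K-3}(T)\subset\mathcal{P}_{K-1}(T)$, then orthogonality gives $\int_T b\,p^2\dx=0$, which forces $p=0$.
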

\begin{proof}
  It has been shown in~\cite[Lem.~3.10]{DieningStornTscherpel2021} that the restriction of the decomposition operators to edges only depends on the value of the argument on the edges.  Due to the tensor product structure this feature is also valid for edges of quadrilaterals.  Thus, as in~\cite[Prop.~3.1]{DieningStornTscherpel2021} it follows that the functions $\frD_i^Tv_K$ are continuous on~$\Omega_i$.  For the local estimates of the form~\eqref{est:DiT-local} for triangles we use Lemma~\ref{lem:K1-triangle}, for triangles and parallelograms we additionally use Proposition~\ref{pro:K1-parallelogram} and in the case of general hybrid meshes we also use Proposition~\ref{prop:gen-quad}.  The global estimate~\eqref{est:K1-bound} follows by summation over~$T \in \mathcal{T}$.  Finally, we estimate $\operatorname{cond}_2(C|_{\mathcal{L}^1_K(\mathcal{T})})$ using Proposition~\ref{pro:C-lower}.
\end{proof}

\begin{remark}\label{rmk:general-dim}
  In general dimensions for meshes consisting of $d$-simplices and parallelotopes an analogous construction yields 
  \begin{align*}
    \operatorname{cond}_2(C|_{\mathcal{L}^1_K(\mathcal{T})})\leq \big(\tfrac{2K+1}{K}\big)^d.
  \end{align*}
  Note that for $d \geq 2$ the dependence of the bound on the dimension $d$ is worse than for simplicial meshes, see~\cite{DieningStornTscherpel2021}.  Below, for the $W^{1,2}$-stability of the $L^2$-projection we rely on the condition $\operatorname{cond}_2(C|_{\mathcal{L}^1_K(\mathcal{T})})<9$.  However, the condition $(\frac{2K+1}{K})^d < 9$ never holds for $d\geq 4$ and for $d=3$ it is satisfied only for $K \geq 13$.  Thus, the estimate is not very useful for $d > 2$.
\end{remark}

\begin{remark}
  The estimate in Proposition~\ref{pro:Di-global} for $K=1$ and for parallelograms is sharp. 
  Let $\mathcal{T}$ consist of the square~$T = [0,1]^2$.
  Let $v_K \in \mathcal{L}^1_1(T)$ with $v_K((0,0)^\top)=1$, $v_K((0,1)^\top)=-1$, $v_K((1,0)^\top)=-1$, $v_K((1,1)^\top)=1$. 
  Then $\skp{Cv_K}{v_K} = \frac 19 \norm{v_K}_2^2$ and therefore $\operatorname{cond}_2(C|_{\mathcal{L}^1_1(\mathcal{T})}) = 9$. 
\end{remark}

\subsection{Decay estimates}
\label{sec:decay-estimates}

Based on Proposition~\ref{pro:C} the technique of~\cite{BankYserentant2014} allows to derive a decay estimate for the $L^2$-projection~$\Pi \colon L^2(\Omega) \to \mathcal{L}^1_K(\mathcal{T})$ with explicit decay rate. 
The rate is independent of the shape regularity and only depends on the polynomial degree $K$. 
Decay estimates, in turn, are key to establish $L^p$- and $W^{1,p}$-stability estimates with the maximal function approach developed in~\cite{DieningStornTscherpel2021}, see Section~\ref{sec:stab-res} below.  

\begin{proposition}[decay estimate]
  \label{pro:decay}
  Let $L,L' \subset \mathcal{T}$ be a collection of elements and let $u \in L^2(\Omega)$. Then with $K_1$ from Proposition~\ref{pro:Di-global} and decay parameter
  \begin{align}\label{def:q}
    q \coloneqq  \frac{\sqrt{K_1}\,-1}{\sqrt{K_1}\,+1}
  \end{align}
  then
  \begin{align}
    \label{eq:decay2}
    \bignorm{\indicator_L \Pi (\indicator_{L'} u)}_2
    \leq
    \min \bigset{ 2
    q^{\metric(L,L')-1},1}
    \norm{\indicator_{L'} u}_2.
  \end{align}
\end{proposition}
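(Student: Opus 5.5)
The plan is to follow the Bank--Yserentant argument as refined in~\cite{DieningStornTscherpel2021}, using only the abstract properties of $C$ from Proposition~\ref{pro:C} and the condition number bound from Proposition~\ref{pro:Di-global}.

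\emph{Trivial cases.} The bound by $1$ follows from $\norm{\indicator_L\Pi(\indicator_{L'}u)}_2\le\norm{\Pi(\indicator_{L'}u)}_2\le\norm{\indicator_{L'}u}_2$. The same argument covers $\metric(L,L')\le 1$, since then $2q^{\metric(L,L')-1}\ge 2q^0\geq 1$ when $\metric(L,L')=1$, and the minimum is attained by $1$ when $\metric(L,L')=0$. So it remains to treat $m\coloneqq\metric(L,L')\ge 2$.

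\emph{Polynomial approximation of $\Pi$.} Restricted to $V\coloneqq\mathcal{L}^1_K(\mathcal{T})$, the operator $C|_V$ is self-adjoint and positive definite, with spectrum in $[\lambda_{\min},\lambda_{\max}]\subset[\lambda_{\max}/K_1,\lambda_{\max}]$ and $\lambda_{\max}\le 1$. For $u\in L^2$ we write $Cu=C\Pi u$. Indeed, $u-\Pi u\perp V$, and self-adjointness together with $C:L^2\to V$ gives $\skp{C(u-\Pi u)}{w}=\skp{u-\Pi u}{Cw}=0$ for all $w$. Hence $\Pi u=(C|_V)^{-1}Cu$.

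Let $p_{n}$ be a polynomial of degree $n=m-2$ approximating $1/\lambda$ on $[\lambda_{\min},\lambda_{\max}]$, and define $\Pi_n u\coloneqq p_{n}(C|_V)\,Cu$. By Proposition~\ref{pro:C}(4), $\Pi_n(\indicator_{L'}u)$ is supported within $n+1$ layers of $L'$, hence vanishes on $L$. Therefore $\indicator_L\Pi(\indicator_{L'}u)=\indicator_L\bigl(I-p_n(C)C\bigr)\Pi(\indicator_{L'}u)$.

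\emph{Choice of $p_n$ and the estimate.} I choose $p_n$ so that $1-\lambda p_n(\lambda)$ is the scaled Chebyshev polynomial $T_{n+1}\bigl(\tfrac{\lambda_{\max}+\lambda_{\min}-2\lambda}{\lambda_{\max}-\lambda_{\min}}\bigr)/T_{n+1}\bigl(\tfrac{\lambda_{\max}+\lambda_{\min}}{\lambda_{\max}-\lambda_{\min}}\bigr)$. This polynomial equals $1$ at $\lambda=0$, so $p_n$ is a genuine polynomial of degree $n$. Its sup-norm on the spectrum is at most $2q^{n+1}=2q^{m-1}$ with $q=(\sqrt{\kappa}-1)/(\sqrt{\kappa}+1)$ and $\kappa=\operatorname{cond}_2(C|_V)\le K_1$, by the standard Chebyshev bound. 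Spectral calculus then gives $\norm{(I-p_n(C)C)\Pi(\indicator_{L'}u)}_2\le 2q^{m-1}\norm{\Pi(\indicator_{L'}u)}_2\le 2q^{m-1}\norm{\indicator_{L'}u}_2$. Finally, $q$ is increasing in $\kappa$, which lets us replace $\kappa$ by $K_1$.

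\emph{Main obstacle.} The delicate step is the support bookkeeping. We must check that $p_n(C)C$ with $\deg p_n=m-2$ enlarges supports by at most $m-1$ layers, so that the result still misses $L$ when $\metric(L,L')=m$. We also need that applying $C$ to an element of $V$ is what Proposition~\ref{pro:C}(4) controls; since $\indicator_{L'}u\notin V$, this holds because every factor is an application of $C$ to an $L^2$ function. Getting the exponent exactly $m-1$ hinges on this count; the Chebyshev estimate itself is classical.
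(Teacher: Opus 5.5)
Your argument is correct and is the same route as the paper. The paper simply cites \cite[Lem.~3.1]{BankYserentant2014} and the abstract Chebyshev-polynomial framework \cite[Prop.~2.2]{DieningStornTscherpel2021} with $\kappa\le K_1$, which you have written out in full. Your layer count is right: $p_n(C)C$ involves at most $C^{m-1}$, so it misses $L$, giving the exponent $m-1$. The degenerate case $\kappa=1$, where your scaled Chebyshev formula divides by zero, is harmless: run the construction directly on $[\lambda_{\max}/K_1,\lambda_{\max}]$, using that $K_1>1$.
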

\begin{proof}
  The statement can be found in~\cite[Lem.~3.1]{BankYserentant2014} and is a consequence of Proposition~\ref{pro:Di-global}. 
  For an abstract framework see~\cite[Sec.~2.2]{DieningStornTscherpel2021}, for which we have verified conditions (C1)--(C4) in Proposition~\ref{pro:C} and \ref{pro:Di-global} above. 
  Due to~\cite[Prop.~2.2]{DieningStornTscherpel2021} we obtain a decay in the form of~\eqref{eq:decay2} with $q$ replaced by
  \begin{align*}
    \bar{q} \coloneqq  \frac{\sqrt{\kappa}\,-1}{\sqrt{\kappa}\,+1} \quad \text{where} \quad \kappa \coloneqq \operatorname{cond}(C|_{\mathcal{L}^1_K(\mathcal{T})}).
  \end{align*}
  The estimate $\kappa \leq K_1$ in Proposition~\ref{pro:Di-global} proves the claim.
\end{proof}

\begin{remark}
  \label{rem:final-q}
  Let us summarise the decay rates~$q$ established in Proposition~\ref{pro:decay} for the refinement routines Q-RG and Q-RB for general quadrilateral initial meshes $\mathcal{T}_0$. 
  In this case, by Lemma~\ref{lem:refinement-properties} and Proposition~\ref{pro:decay}  we have that~\eqref{eq:decay2} holds with
  \begin{align}
    \label{eq:final-q}
    q =
    \frac{\sqrt{(2K+1)(2K+2)} - K}{\sqrt{(2K+1)(2K+2)} + K}. 
  \end{align}
  In the special case of Q-RG applied to a P-quadrilateral initial mesh (resulting in SP-hybrid meshes, see Table~\ref{tab:refinement-rules}), with $K_1 = \big( \frac{2K+1}{K} \big)^2$  we obtain the smaller value
  \begin{align*}
    q =
    \frac{K+1}{3K+1}. 
  \end{align*}
  For both cases, and also for simplicial meshes, the decay rate $q$ as function of $K$ is displayed in Figure~\ref{fig:decay}, see Table~\ref{tab:decay} for the values. 
  Note that $q(K)$ is monotonic decreasing in $K$. 
  Hence $q$ is bounded above by $q(2)< \tfrac{1}{2}$ for any $K \geq 2$.
\end{remark}

\begin{figure}
  \begin{tikzpicture}[scale = 0.8]
    \begin{axis}[
      width=10cm,
      height=7cm,
      xlabel={$K$},
      ylabel={$q$},
      xmin=1, xmax=10,
      ymin=0.15, ymax=0.6,
      xtick={1,2,3,4,5,6,7,8,9,10},
      legend pos=north east,
      xmajorgrids=true,
      ymajorgrids=true,
      grid style=dotted, 
      legend cell align=left,
      legend style={align=left}
      ]

      \addplot[
      color=blue,
      mark=*,
      domain=1:10,
      thick,
      samples=10 
      ]
      {(sqrt(2*x + 2) - sqrt(x)) / (sqrt(2*x + 2) + sqrt(x))};
      \addlegendentry{\scriptsize simplicial mesh}

      \addplot[
      color=green!70!black, 
      thick,
      mark=square*,
      domain=1:10,
      samples=10
      ]
      {(x + 1) / (3*x + 1)};
      \addlegendentry{\scriptsize SP-hybrid mesh
      	}
      	
      \addplot[
      color=red,
      mark=triangle*,
      thick,
      domain=1:10,
      samples=10
      ]
      {(sqrt((2*x + 1)*(2*x + 2)) - x) / (sqrt((2*x + 1)*(2*x + 2)) + x)};
      \addlegendentry{\scriptsize general hybrid mesh}
      
      \addplot[
      color=black,
      dotted, 
      very thick,      domain=1:10,
      samples=10
      ]
      {0.5};
      \addlegendentry{\scriptsize $1/2$}
    \end{axis}
  \end{tikzpicture}
  \caption{Decay rate $q$ plotted over polynomial degree $K$ for different types of meshes, see Remark~\ref{rem:final-q}. 
  	Note that Q-RG generates SP-hybrid and Q-RB generates general hybrid meshes for P-quadrilateral initial mesh, and that both generate general hybrid meshes for quadrilateral initial mesh, see Table~\ref{tab:refinement-rules}.}
  \label{fig:decay}
\end{figure}

\begin{table}[ht!]\small
  \centering
  \begin{TAB}(r)[1pt]{|c|c|c|c|}{|c|cccccccccc|}
    $K$ & \;\; simplicial $\mathcal{T}$  \;\; &  \;\;  SP-hybrid $\mathcal{T}$ \;\; & \;\;  general hybrid $\mathcal{T}$ \;\; \\
    $1$  & $0.333333$ & $(0.500000)$ & $(0.551982)$ \\
    $2$  & $0.267949$ & $0.428571$ & $0.465042$ \\
    $3$  & $0.240408$ & $0.400000$ & $0.427662$ \\
    $4$  & $0.225148$ & $0.384615$ & $0.406829$ \\
    $5$  & $0.215438$ & $0.375000$ & $0.393540$ \\
    $6$  & $0.208712$ & $0.368421$ & $0.384323$ \\
    $7$  & $0.203777$ & $0.363636$ & $0.377555$ \\
    $8$  & $0.200000$ & $0.360000$ & $0.372373$ \\
    $\vdots$ & $\vdots$ & $\vdots$ & $\vdots$ \\
    $\infty$ & $\frac{\sqrt{2} -1}{\sqrt{2}+1}$ & $\tfrac{1}{3}$ & $\tfrac{1}{3}$ \\
  \end{TAB}
  \caption{Decay rate $q$ for different types of (hybrid) meshes $\mathcal{T}$ and polynomial degree $K$. If $\gamma = 2$, then $W^{1,2}$-stability holds, provided that $q< 1/2$, see Corollary~\ref{cor:stability}~\ref{itm:stability-W12}. Values greater than or equal~$\frac 12$ are in brackets.}
  \label{tab:decay}
\end{table}

\subsection{Stability results}
\label{sec:stab-res}
Based on the decay estimate in Proposition~\ref{pro:decay} we obtain the following stability estimates directly as in \cite[Sec.~4]{DieningStornTscherpel2021}. 

\begin{theorem}
  \label{thm:stability}
  Let $\mathcal{T}$ be a hybrid mesh with grading~$\gamma_{\mathcal{T}}$ and $\rho \in \mathcal{L}^0_0(\mathcal{T})$ be a weight with grading~$\gamma_\rho$. Let $\Pi\colon L^2(\Omega) \to \mathcal{L}^1_K(\mathcal{T})$ denote the $L^2$-projection with $K \in \mathbb{N}$. 
  Let $1 \leq p \leq \infty$  and let $q$ be as in~\eqref{def:q}, see also Table~\ref{tab:decay}.
  \begin{enumerate}
  \item  If
    $\gamma_\rho \gamma_{\mathcal{T}}^{2\abs{\frac 12 - \frac 1p}} < q^{-1}$,
    then there exists a constant~$c_0> 0$ such that
    \begin{align*}
      \bignorm{\rho \Pi u}_p &\leq c_0 \bignorm{\rho u}_p \qquad \text{for all $u \in L^p(\Omega)$.}
    \end{align*}
  \item  If $
    \gamma_\rho \gamma_{\mathcal{T}}^{1+2\abs{\frac 12 - \frac 1p}}  < q^{-1}$, then there exists a constant $c_1 > 0$ such that
    \begin{align*}
      \bignorm{\rho \nabla \Pi u}_p &\leq c_1 \bignorm{\rho \nabla u}_p \qquad \text{for all $u \in W^{1,p}(\Omega)$.}
    \end{align*}
  \end{enumerate}
  The constants~$c_0, c_1$ only depend on~$K,\gamma_\rho, \gamma_{\mathcal{T}},\chi(\mathcal{T})$ and the constants in~\eqref{eq:hT}.
\end{theorem}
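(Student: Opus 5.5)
The plan is to follow the maximal-function argument of \cite[Sec.~4]{DieningStornTscherpel2021} verbatim. The only mesh-specific ingredients are the decay estimate of Proposition~\ref{pro:decay}, the grading~\eqref{def:grading}, shape-regularity $\chi(\mathcal{T})$, and a Scott--Zhang type quasi-interpolant. All of these are available for hybrid meshes, since the argument there uses the mesh only through element volumes, neighbourhoods in the metric $\metric$, and local inverse/approximation estimates.

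\emph{Step 1 (weighted $L^2$ and $L^p$ stability, part (a)).} Decompose $u=\sum_{T'\in\mathcal{T}}\indicator_{T'}u$ and, for fixed $T$, estimate $\norm{\indicator_T\Pi(\indicator_{T'}u)}_p$. On a single element the $L^p$ and $L^2$ norms of discrete functions are equivalent by inverse estimates on the reference elements. For quadrilaterals this uses the bilinear map $B_T$ and $J_T\eqsim \abs{T}$, with constants depending on $\chi(T)$ and $K$. This gives
\begin{align*}
\norm{\indicator_T\Pi(\indicator_{T'}u)}_p\lesssim \abs{T}^{\frac1p-\frac12}\norm{\indicator_T\Pi(\indicator_{T'}u)}_2
\lesssim \abs{T}^{\frac1p-\frac12}q^{\metric(T,T')}\norm{\indicator_{T'}u}_2 .
\end{align*}
For $p\ge2$ one then bounds $\norm{\indicator_{T'}u}_2\le\abs{T'}^{\frac12-\frac1p}\norm{\indicator_{T'}u}_p$. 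For $p\le2$ one uses duality (self-adjointness of $\Pi$), reducing to $p'\ge2$. The volume ratio is $\abs{T'}/\abs{T}\le\gamma_{\mathcal{T}}^{2\metric(T,T')}$ up to constants, by \eqref{eq:grading2} applied to $h$ and \eqref{eq:shape-regular}. The weight ratio is bounded by $\gamma_\rho^{\metric(T,T')}$. Summing over $T'$ in shells $\metric(T,T')=m$, the kernel is then bounded by $(\gamma_\rho\gamma_{\mathcal{T}}^{2\abs{\frac12-\frac1p}}q)^m$. This is geometrically summable under the hypothesis of (a). The number of elements per shell grows at most geometrically, and this growth is absorbed exactly as in \cite{DieningStornTscherpel2021} by passing to the discrete maximal operator (averaging over shells) rather than counting elements. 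A Schur-test / maximal-function bound then yields $\norm{\rho\Pi u}_p\le c_0\norm{\rho u}_p$. The case $p=\infty$ (and $p=1$ by duality) is handled directly by the same kernel bound.

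\emph{Step 2 (part (b)).} Let $I_h$ be the Scott--Zhang operator onto $\mathcal{L}^1_1(\mathcal{T})\subset\mathcal{L}^1_K(\mathcal{T})$ on the hybrid mesh. It is a projection, locally $W^{1,p}$-stable, and satisfies $\norm{\indicator_T h^{-1}(u-I_hu)}_p\lesssim\norm{\nabla u}_{L^p(\omega_T)}$ with constants depending on $\chi(\mathcal{T})$; its construction is local and transfers to quadrilaterals via $B_T$. Writing $\nabla\Pi u=\nabla I_hu+\nabla\Pi(u-I_hu)$, the local inverse estimate gives $\norm{\rho\nabla\Pi w}_p\lesssim\norm{\rho h^{-1}\Pi w}_p$. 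We then apply part (a) with the weight $\rho h^{-1}$, whose grading is at most $\gamma_\rho\gamma_{\mathcal{T}}$, to $w=u-I_hu$. This gives $\norm{\rho h^{-1}\Pi w}_p\lesssim\norm{\rho h^{-1}w}_p\lesssim\norm{\rho\nabla u}_p$, using the grading of $\rho$ on patches. The condition of (a) for $\rho h^{-1}$ is exactly $\gamma_\rho\gamma_{\mathcal{T}}^{1+2\abs{\frac12-\frac1p}}<q^{-1}$.

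\emph{Main obstacle.} The delicate point is the summation in Step 1: controlling the growth of shells $\{T':\metric(T,T')=m\}$ without losing an extra factor beyond $\gamma_{\mathcal{T}}^{2\abs{1/2-1/p}}$. I would reproduce the maximal-operator argument of \cite[Sec.~4]{DieningStornTscherpel2021}, checking only that its covering and volume-comparison lemmas use nothing beyond \eqref{eq:shape-regular} and \eqref{def:grading}, both of which hold for hybrid meshes.
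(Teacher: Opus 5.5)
Your overall plan is the paper's proof: the paper also just transfers \cite[Thms.~4.12, 4.13]{DieningStornTscherpel2021}, after checking that the volume lemma survives via \eqref{eq:shape-regular} and that inverse estimates and a quasi-interpolant exist on hybrid meshes. Your Step~2 (Scott--Zhang, inverse estimate, then part (a) with the weight $\rho h^{-1}$ of grading $\gamma_\rho\gamma_{\mathcal{T}}$) is the standard derivation of (b) and is fine.

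\textbf{The gap: the Step~1 mechanism you sketch.} This is exactly the part you defer, and as sketched it does not give the theorem.
\begin{itemize}
\item The element-pair Schur test cannot be closed in general, as you note yourself.
\item Your proposed repair, averaging over shells, does not rescue it. Apply Proposition~\ref{pro:decay} to whole shells $L_m(T)=\{T'\colon \metric(T,T')=m\}$, and bound $\|\indicator_{L_m(T)}u\|_2$ by $|\Omega_T^m|^{1/2}$ times an average of $u$ over $\Omega_T^m=\bigcup\{T'\colon \metric(T,T')\le m\}$. Then the factor $(|\Omega_T^m|/|T|)^{1/2}\lesssim(1+m)\gamma_{\mathcal{T}}^m$ appears for every $p$.
\item This yields only the weaker condition $\gamma_\rho\gamma_{\mathcal{T}}q<1$, and only for $p\neq 2$, since an $L^2$-type maximal function is not bounded on $L^2$.
\item In particular it gives nothing for the weighted $L^2$ case $\gamma_\rho q<1$. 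That case is precisely what (b) with $p=2$ needs (weight $h^{-1}$), and hence Corollary~\ref{cor:stability}\,\ref{itm:stability-W12} ($2q<1$).
\end{itemize}

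\textbf{The missing idea.} The framework derives $L^p$ bounds from weighted $L^2$ bounds, as the paper's introduction stresses, and it never counts elements. One route that gives exactly the stated exponent has two steps.

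\emph{Weighted $L^2$ bound.} For any weight $\sigma$ with $\gamma_\sigma q<1$ one has $\|\sigma\Pi u\|_2\lesssim\|\sigma u\|_2$.
\begin{itemize}
\item Split $\mathcal{T}$ into level sets $S_k$ of $\sigma$ on the geometric scale $\gamma_\sigma^k$.
\item The grading gives $\metric(S_k,S_l)\ge |k-l|$.
\item Apply Proposition~\ref{pro:decay} to the pairs $S_k,S_l$.
\item Close with a discrete Young inequality on $\mathbb{Z}$ with kernel $\gamma_\sigma^{n}\min\{2q^{|n|-1},1\}$.
\end{itemize}

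\emph{Case $p>2$.} Set $r=(p/2)'$.
\begin{itemize}
\item Write $\|\rho\Pi u\|_p^2=\sup\int|\rho\Pi u|^2 g\dx$ over $g\ge 0$ with $\|g\|_r\le 1$.
\item An inverse estimate lets you replace $g$ on each $T$ by its mean $\bar g_T$. Then majorise $\bar g$ by the graded weight $w|_T=\max_{T'}\beta^{-\metric(T,T')}\bar g_{T'}$, which has grading $\beta$.
\item One has $\|w\|_r^r\le\sum_{T'}\bar g_{T'}^r\sum_{T}\beta^{-r\metric(T,T')}|T|$.
\item Hence $\|w\|_r\lesssim\|g\|_r$ as soon as $\beta^r>\gamma_{\mathcal{T}}^2$. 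This uses only the volume bound $|\Omega_{T'}^m|\lesssim(1+m)^2\gamma_{\mathcal{T}}^{2m}|T'|$, i.e.\ volumes rather than cardinalities. This is where shape-regularity and grading enter.
\item Apply the weighted $L^2$ bound with $\sigma=\rho w^{1/2}$, whose grading is $\gamma_\rho\beta^{1/2}$. With H\"older this gives $\|\rho\Pi u\|_p^2\lesssim\|\rho u\|_p^2\|w\|_r$.
\item Optimising over $\beta$ gives exactly the condition $\gamma_\rho\gamma_{\mathcal{T}}^{2(1/2-1/p)}q<1$.
\end{itemize}

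\emph{Case $p<2$.} This follows by duality with the weight $\rho^{-1}$.
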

\begin{proof}
  The proof follows exactly as the one of Theorems~4.12 and 4.13 of~\cite{DieningStornTscherpel2021}. 
  Note that the volume decay estimate in Lemma~4.10 therein generalises to shape-regular hybrid meshes, since~\eqref{eq:shape-regular} holds. 
  Moreover, standard inverse estimates and the local stability and approximability of a quasi-interpolation operator is employed, which is available in~\cite{BernardiGirault1998} and~\cite[Sec.~2.4]{ApelMelenk2017}.
\end{proof}

Recall that for any family of hybrid meshes generated by the adaptive refinement routines Q-RB and Q-RG starting from a quadrilateral mesh $\mathcal{T}_0$ the shape-regularity constant is uniformly bounded by~$4\chi(\mathcal{T}_0)$, see Lemma~\ref{lem:shape-regular-Q-RG-Q-RB}. 
Moreover, by Lemma~\ref{lem:AFS-grading} and Remark~\ref{rem:general-grading} the grading constant is $\gamma=2$, see~\eqref{eq:grading-Q-ex}.  For the convenience of the reader let us state the stability results without additional weights in this special case. 

\begin{corollary}
  \label{cor:stability}
  Let $\mathcal{T}_0$ be a quadrilateral mesh and let~$\mathcal{T}$ be generated from~$\mathcal{T}_0$ by the Q-RG or the Q-RB refinement. 
  Let  $1 \leq p \leq \infty$ and let $q$ be as in~\eqref{eq:final-q}, see also last column in Table~\ref{tab:decay} for the values of~$q$.
  \begin{enumerate}
  \item
    \label{itm:stability-Lp}
    If
    $2^{2\abs{\frac 12 - \frac 1p}} < q^{-1}$,
    then there exists a constant~$c_0 > 0$ such that
    \begin{align*}
      \bignorm{\Pi u}_p &\leq c_0 \bignorm{u}_p \qquad \text{for all $u \in L^p(\Omega)$.}
    \end{align*}
    In particular, the stability holds for $p \in [1,\infty]$ provided that  $K \geq 2$. Furthermore, for P-quadrilateral~$\mathcal{T}_0$ and Q-RG it additionally holds for $p \in (1,\infty)$ and $K=1$. 
  \item
        \label{itm:stability-W1p}
        If $ 2^{1+2\abs{\frac 12 - \frac 1p}} < q^{-1}$, then there exists a constant $c_1 > 0$ such that
    \begin{align*}
      \bignorm{\nabla \Pi u}_p &\leq c_1 \bignorm{\nabla u}_p \qquad \text{for all $u \in W^{1,p}(\Omega)$.}
    \end{align*}
  \item \label{itm:stability-W12}
    If $K \geq 2$, then we have $q < \frac 12$ and there is a constant $c_2 > 0$ such that
    \begin{align*}
      \bignorm{\nabla \Pi u}_2 &\leq c_2 \bignorm{\nabla u}_2 \qquad \text{for all $u \in W^{1,2}(\Omega)$.}
    \end{align*}
  \end{enumerate}
  The constants~$c_0, c_1,c_2$ only depend on~$K$, $\chi(\mathcal{T}_0)$ and the constants in~\eqref{eq:hT}.
\end{corollary}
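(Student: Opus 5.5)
The plan is to specialise Theorem~\ref{thm:stability} to the constant weight $\rho \equiv 1$, which has grading $\gamma_\rho = 1$. First I collect the mesh quantities entering that theorem. By Lemma~\ref{lem:refinement-properties} and Table~\ref{tab:refinement-rules}, every mesh $\mathcal{T}$ generated by Q-RG or Q-RB from a quadrilateral $\mathcal{T}_0$ is general hybrid. Hence Proposition~\ref{pro:Di-global} gives $K_1 = \frac{(2K+1)(2K+2)}{K^2}$, and Proposition~\ref{pro:decay} gives the decay rate $q$ of~\eqref{eq:final-q}. By Lemma~\ref{lem:shape-regular-Q-RG-Q-RB}, $\chi(\mathcal{T}) \lesssim \chi(\mathcal{T}_0)$ uniformly. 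By Lemma~\ref{lem:AFS-grading}, Remark~\ref{rem:general-grading} and~\eqref{eq:grading-Q-ex}, there is a mesh size function with grading $\gamma_{\mathcal{T}} \le 2$, whose constants in~\eqref{eq:hT} depend only on $\mathcal{T}_0$.

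Inserting $\gamma_\rho = 1$ and $\gamma_{\mathcal{T}} \le 2$ into the hypotheses of Theorem~\ref{thm:stability} yields the conditions $2^{2|\frac12-\frac1p|} < q^{-1}$ and $2^{1+2|\frac12-\frac1p|} < q^{-1}$. These are exactly the hypotheses in~\ref{itm:stability-Lp} and~\ref{itm:stability-W1p}. Both parts therefore follow directly, with constants depending only on $K$, $\chi(\mathcal{T}_0)$ and the constants in~\eqref{eq:hT}.

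For the explicit ranges I would argue as follows.
\begin{itemize}
\item \emph{Part~\ref{itm:stability-W12}.} Take $p=2$, so the condition reads $2q<1$. By Remark~\ref{rem:final-q}, $q(K)$ is decreasing in $K$, and $q(2) = \frac{\sqrt{30}-2}{\sqrt{30}+2} \approx 0.465 < \frac12$. I would verify monotonicity by writing $q = \frac{s-1}{s+1}$ with $s = \sqrt{(2+1/K)(2+2/K)}$. Since $s$ decreases in $K$ and $t \mapsto \frac{t-1}{t+1}$ is increasing, $q$ decreases in $K$.
\item \emph{$L^p$-stability for all $p \in [1,\infty]$ and $K \ge 2$.} Since $2|\frac12-\frac1p| \le 1$, the condition in~\ref{itm:stability-Lp} is implied by $2q<1$, which holds for $K \ge 2$ as above.
\item \emph{The case $K=1$ under Q-RG with P-quadrilateral $\mathcal{T}_0$.} Here the meshes are SP-hybrid by Table~\ref{tab:refinement-rules}, so Proposition~\ref{pro:Di-global} gives $K_1 = 9$ and $q = \frac{K+1}{3K+1} = \frac12$. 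The condition $2^{2|\frac12-\frac1p|} < 2$ holds exactly when $|\frac12 - \frac1p| < \frac12$, that is, for $p \in (1,\infty)$.
\end{itemize}

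The main obstacle is not the argument itself, which is a bookkeeping specialisation of Theorem~\ref{thm:stability}. It is making sure the dependence of the constants is as claimed. The constants in~\eqref{eq:hT} for the generated family depend on $\mathcal{T}_0$ through Lemma~\ref{lem:AFS-grading}, and this dependence must be carried into $c_0, c_1, c_2$. The other point to check carefully is the monotonicity of $q$ in $K$ used above.
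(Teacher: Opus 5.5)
Your proposal is correct and follows the paper's own argument. The paper obtains the corollary by specialising Theorem~\ref{thm:stability} to $\rho\equiv 1$ (so $\gamma_\rho=1$), using the grading $\gamma_{\mathcal{T}}=2$, the uniform shape-regularity from Lemma~\ref{lem:shape-regular-Q-RG-Q-RB}, and the values of $q$ from Remark~\ref{rem:final-q}. Your explicit checks supply details that the paper only records in Remark~\ref{rem:final-q} and Table~\ref{tab:decay}, and they are correct: the monotonicity of $q$ in $K$, the bound $q(2)=\frac{\sqrt{30}-2}{\sqrt{30}+2}<\frac12$, and the $K=1$ SP-hybrid case with $q=\frac12$, which yields $p\in(1,\infty)$.
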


Corollary~\ref{cor:stability} extends the results by~\cite{AliFunkenSchmidt2022} on $W^{1,2}$-stability for the Q-RG and Q-RB refinement from $K \in \set{2,\dots,9}$ to $K \geq 2$. 
Moreover, we allow for quadrilateral initial meshes, while~\cite{AliFunkenSchmidt2022} is restricted to P-quadrilateral initial meshes. Additionally, we cover also the case of $L^p$- and $W^{1,p}$-stability.

\begin{remark}[Zero boundary values]
  \label{rem:zero}
Both Theorem~\ref{thm:stability} and Corollary~\ref{cor:stability} generalise to spaces with zero traces on (a subset of) $\partial \Omega$. 
 We refer to~\cite[Sec.~3.6]{DieningStornTscherpel2021} for details on the modification of the arguments. 
 The key ingredient is that the decomposition operators respect zero boundary values and a function space decomposition similar to~\eqref{eq:space-decomposition} is available. 
 Also quasi-interpolation operators preserving zero boundary values are available,  see~\cite{BernardiGirault1998}.
\end{remark}

\subsubsection*{Acknowledgements}
The work by T.T. was supported by the
German Research Foundation (DFG) via grant TRR 154, subproject C09, project number 239904186, and by the Graduate School CE within Computational
Engineering at Technische Universität Darmstadt.

\printbibliography
\end{document}